\theoremstyle{plain}
\newtheorem{theorem}{Theorem}[section]
\newtheorem{maintheorem}{Theorem}
\newtheorem{proposition}[theorem]{Proposition}
\newtheorem{lemma}[theorem]{Lemma}
\newtheorem{fact}[theorem]{Fact}
\theoremstyle{definition}
\newtheorem{definition}[theorem]{Definition}
\newtheorem{remark}[theorem]{Remark}
\newcommand{\nc}{\newcommand}
\nc{\dmo}{\DeclareMathOperator}
\nc{\Q}{\mathbb{Q}}
\nc{\F}{\mathbb{F}}
\nc{\R}{\mathbb{R}}
\nc{\Z}{\mathbb{Z}}
\nc{\C}{\mathbb{C}}
\nc{\Ell}{\mathcal{L}}
\nc{\M}{\mathcal{M}}
\nc{\K}{\mathcal{K}}
\nc{\I}{\mathcal{I}}
\nc{\U}{\mathcal U}
\nc{\disk}{\mathbb{D}}
\nc{\hyp}{\mathbb{H}}
\renewcommand{\P}{\mathbb{P}}
\nc{\CP}{\mathbb{CP}}
\nc{\cS}{\mathcal{S}}
\dmo{\Mod}{Mod}
\dmo{\Diff}{Diff}
\dmo{\Homeo}{Homeo}
\dmo{\dist}{dist}
\dmo\BDiff{BDiff}
\dmo\SO{SO}
\dmo\Hom{Hom}
\dmo\SL{SL}
\dmo\Sp{Sp}
\dmo\rank{rank}
\dmo\sig{sig}
\dmo\Out{Out}
\dmo\Aut{Aut}
\dmo\Inn{Inn}
\dmo\GL{GL}
\dmo\PSL{PSL}
\dmo\BHomeo{BHomeo}
\dmo\EHomeo{EHomeo}
\dmo\EDiff{EDiff}
\nc\Sig{\Sigma}
\dmo\Teich{Teich}
\dmo\Fix{Fix}
\nc{\pair}[1]{\langle #1 \rangle}
\nc{\abs}[1]{\left| #1 \right|}
\nc{\action}{\circlearrowright}
\nc{\norm}[1]{\left | \left | #1 \right | \right |}
\nc{\abcd}[4]{\left(\begin{array}{cc} #1 & #2 \\ #3 & #4 \end{array}\right)}
\dmo{\Isom}{Isom}
\nc{\normal}{\vartriangleleft}
\dmo{\Vol}{Vol}
\dmo{\im}{Im}
\dmo{\Push}{Push}
\dmo{\Conf}{Conf}
\dmo{\PConf}{PConf}
\dmo{\id}{id}
\dmo{\Jac}{Jac}
\dmo{\Pic}{Pic}
\renewcommand{\epsilon}{\varepsilon}
\nc{\coloneq}{\mathrel{\mathop:}\mkern-1.2mu=}
\nc{\margin}[1]{\marginpar{\scriptsize #1}}
\nc{\para}[1]{\medskip\noindent\textbf{#1.}}
\nc{\red}[1]{\textcolor{red}{#1}}
\title[Monodromy of plane curves]{On the monodromy group of the family of smooth plane curves}
\author{Nick Salter}
\email{nks@math.uchicago.edu}
\date{October 16, 2016}
\address{Department of Mathematics\\ University of Chicago\\ 5734 S. University Ave., Chicago, IL 60637}
\begin{document}
\maketitle

\begin{abstract}
We consider the space $\mathcal P_d$ of smooth complex projective plane curves of degree $d$. There is the tautological family of plane curves defined over $\mathcal P_d$, and hence there is a monodromy representation $\rho_d: \pi_1(\mathcal P_d) \to \Mod(\Sigma_g)$ into the mapping class group of the fiber. We show two results concerning the image of $\rho_d$. First, we show that the presence of an invariant known as a ``$n$-spin structure'' constrains the image in ways not predicted by previous work of Beauville \cite{beauville}. Second, we show that for $d = 5$, our invariant is the only obstruction for a mapping class to be contained in the image. This requires combining the algebro-geometric work of L\"onne \cite{lonne} with Johnson's theory \cite{johnsonfg} of the Torelli subgroup of $\Mod(\Sigma_g)$.
\end{abstract}

\section{Introduction}
Let $\mathcal{P}_d$ denote the moduli space of smooth degree-$d$ plane curves.\footnote{See Section \ref{subsection:agbasics} for a review of these algebro-geometric notions.} The tautological family of plane curves over $\mathcal{P}_d$ determines a monodromy representation
	\[
	\rho_d: \pi_1(\mathcal{P}_d) \to \Mod(\Sigma_g),
	\]
	where $g = {d-1 \choose 2}$ and $\Mod(\Sigma_g)$ is the mapping class group of the surface $\Sigma_g$ of genus $g$. This note concerns the the problem of computing the image of $\rho_d$. 
	
	The first step towards determining the image of $\rho_d$ has been carried out by A. Beauville in \cite{beauville}, building off of earlier work of W. Janssen \cite{janssen} and S. Chmutov \cite{chmutov}. Let $\Psi: \Mod(\Sigma_g) \to \Sp_{2g}(\Z)$ denote the symplectic representation on $H_1(\Sigma_g; \Z)$. Beauville has determined $\Psi \circ \rho_d$; he shows that for $d$ even it is a surjection, while for $d$ odd it is the (finite-index) stabilizer of a certain spin structure. {\em A priori}, it is therefore possible that $\rho_d$ could surject onto $\Mod(\Sigma_g)$ or a spin mapping class group, depending on the parity of $d$. 
	
	The first theorem of the present paper is that in general, this does not happen. We show that a so-called {\em $n$-spin structure} provides an obstruction for $f \in \Mod(\Sigma_g)$ to be contained in $\im(\rho_d)$, and that this obstruction is not detectable on the level of homology, i.e. that Beauville's ``upper bound'' on $\im(\rho_d)$ is not sharp. 
	\begin{maintheorem}\label{theorem:contain}
	For all $d \ge 4$, there is a finite-index subgroup $\Mod(\Sigma_g)[\phi_d] \le \Mod(\Sigma_g)$ for which
	\[
	\im(\rho_d) \subseteq \Mod(\Sigma_g)[\phi_d].
	\]
	For $d \ge 6$, the containment 
	\[
	\Mod(\Sigma_g)[\phi_d] \subsetneqq \Psi^{-1}(\Psi(\Mod(\Sigma_g)[\phi_d]))
	\]
	is {\em strict}. Consequently, for $d \ge 6$, the same is true for $\im(\rho_d)$:
	\[
	\im(\rho_d) \subsetneqq \Psi^{-1}(\Psi(\im (\rho_d))).
	\]
	\end{maintheorem}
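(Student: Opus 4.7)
The plan is to construct $\phi_d$ from the algebraic geometry of the tautological family, verify that monodromy preserves it, and then exploit the action of the Torelli group on higher $n$-spin structures to establish strictness when $d \ge 6$.

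\textbf{Step 1: Constructing $\phi_d$ and proving containment.} For any smooth $C \in \mathcal P_d$, the adjunction formula gives $K_C \cong \mathcal O_C(d-3)$. Setting $n = d-3 \ge 1$, the line bundle $\mathcal L_C := \mathcal O_{\CP^2}(1)|_C$ satisfies $\mathcal L_C^{\otimes n} \cong K_C$, so is an $n$-th root of the canonical bundle, i.e.\ an $n$-spin structure on $C$. Globally, $\mathcal O_{\CP^2}(1)$ restricts to a line bundle on the total space of the tautological family $\mathcal X_d \to \mathcal P_d$ whose $n$-th tensor power is the relative canonical bundle; this is a canonical section of the finite-sheeted cover of $\mathcal P_d$ parametrizing pairs (curve, $n$-spin). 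The underlying topological $n$-spin structure $\phi_d$ on $\Sigma_g$ is therefore monodromy-invariant. Defining $\Mod(\Sigma_g)[\phi_d]$ to be its stabilizer, finiteness of the set of $n$-spin structures (of cardinality $n^{2g}$) gives finite index, and monodromy-invariance gives $\im(\rho_d) \subseteq \Mod(\Sigma_g)[\phi_d]$.

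\textbf{Step 2: Strictness for $d \ge 6$.} The plan is to produce a single $f \in \I_g$ (the Torelli group) that does not preserve $\phi_d$. Such an element works for both strict containments at once: since $\Psi(f) = I$, we have $f \in \Psi^{-1}(\Psi(H))$ for every subgroup $H \le \Mod(\Sigma_g)$, in particular for $H = \Mod(\Sigma_g)[\phi_d]$ and $H = \im(\rho_d)$; on the other hand $f \notin \Mod(\Sigma_g)[\phi_d]$, and since $\im(\rho_d) \subseteq \Mod(\Sigma_g)[\phi_d]$, also $f \notin \im(\rho_d)$. I would take $f$ to be a bounding pair map $T_a T_b^{-1}$ for disjoint simple closed curves $a, b$ with $[a] = [b] \in H_1(\Sigma_g; \Z)$. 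The standard formula for the Dehn twist action on $n$-spin structures shows that $T_a T_b^{-1}$ fixes $\phi_d$ precisely when $\phi_d(a) \equiv \phi_d(b) \pmod n$, so the task reduces to producing a bounding pair with $\phi_d(a) \not\equiv \phi_d(b) \pmod n$.

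\textbf{Main obstacle.} The crux is verifying the existence of such a bounding pair when $n \ge 3$. For $n = 2$ (i.e.\ $d = 5$) one has $\phi_d(a) \equiv \phi_d(b) \pmod 2$ automatically for any homologous pair, reflecting the fact that ordinary spin structures are classified by symplectic data mod $2$; this is precisely why strict containment fails for $d = 5$ and must be restricted to $d \ge 6$. For $n \ge 3$ the value $\phi_d(c)$ genuinely depends on more than the class $[c] \in H_1(\Sigma_g; \Z/n)$, and I would produce the required pair either abstractly, by invoking general results on $\Mod(\Sigma_g)$-orbits of $n$-spin structures (e.g.\ those of Randal-Williams), or concretely via an explicit degeneration of $C$ to a configuration such as a union of $d$ lines and an analysis of the resulting vanishing cycles, for which the values $\phi_d(a)$ and $\phi_d(b)$ can be computed directly from the plane-curve geometry.
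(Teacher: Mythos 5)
Your Step 1 is essentially the paper's argument: the $(d-3)$-spin structure $\mathcal O(1)|_C$ is defined globally over $\mathcal P_d$, hence monodromy-invariant, and its stabilizer has finite index. Your Step 2, however, departs from the paper in a way that is logically attractive but leaves the decisive point unproved. Using a Torelli element $f = T_aT_b^{-1}$ is cleaner than what the paper does, because $\Psi(f)=I$ makes both strict containments automatic once $f \notin \Mod(\Sigma_g)[\phi_d]$; the paper instead uses a single Dehn twist $T_b$ about a nonseparating curve and must therefore run a case analysis on the parity of $d$ (for $d$ odd it has to arrange $\alpha_d(b)$ to be nonzero but $\frac{d-3}{2}$-torsion, so that $\Psi(T_b)$ still lies in $\Psi(\Mod(\Sigma_g)[\phi_d])$ by Beauville's description of that image as a mod-$2$ spin stabilizer). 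Your reduction of the problem to finding a bounding pair with $\phi_d(a)\not\equiv\phi_d(b) \pmod{d-3}$ is correct, as is your observation that this is impossible for $n=2$ and is exactly why $d=5$ is excluded.

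The genuine gap is that the existence of such a bounding pair for $n = d-3 \ge 3$ is precisely the content of the strictness claim, and you only gesture at two possible ways to establish it without carrying either out. The paper fills this hole with the Humphries--Johnson interpretation of $\phi_d$ as a generalized winding number function $\alpha_d \in H^1(T^1\Sigma_g;\Z/(d-3)\Z)$, which satisfies two computable identities: twist-linearity $\alpha_d(T_b(a)) = \alpha_d(a) + \pair{a,b}\alpha_d(b)$, and the subsurface relation $\sum \alpha_d(b_i) = \chi(S)\alpha_d(\zeta)$ for the boundary curves of a subsurface $S$, where $\alpha_d(\zeta)$ is a \emph{primitive} element of $\Z/(d-3)\Z$ (the fiber class). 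Twist-linearity produces, in any one-holed torus, a curve $c$ with $\alpha_d(c)=0$; taking three disjoint such tori (here one uses $d\ge 6$, so $g \ge 10$) and a fourth curve $b$ cobounding a $4$-holed sphere with $c_1,c_2,c_3$ forces $\alpha_d(b) = -2\alpha_d(\zeta) \ne 0$. The same two identities are exactly what you would need to exhibit your bounding pair (and also to justify your unproved assertion that $T_aT_b^{-1}$ fixes $\phi_d$ iff $\alpha_d(a)=\alpha_d(b)$, which follows from twist-linearity applied to both twists). Without some such computation --- or a correctly quoted classification of $\Mod(\Sigma_g)$-orbits and Torelli actions on $n$-spin structures, which in the literature postdates and partly relies on arguments of this kind --- your Step 2 is an accurate plan rather than a proof.
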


In the statement of the theorem, $\phi_d$ is a cohomology class in $H^1(T^{*,1}\Sigma_g; \Z / (d-3) \Z)$, where $T^{*,1}\Sigma_g$ denotes the unit cotangent bundle of $\Sigma_g$, and $\Mod(\Sigma_g)[\phi_d]$ denotes the stabilizer of $\phi_d$ in the natural action of $\Mod(\Sigma_g)$ on $H^1(\Sigma_g; \Z/(d-3)\Z)$. The class $\phi_d$ is an instance of an $n$-spin structure for $n = d-3$, and is constructed in a natural way from a $(d-3)^{rd}$ root of the canonical bundle of a plane curve. Such objects, and the subgroups of $\Mod(\Sigma_g)$ fixing the set of {\em all} $n$-spin structures, were studied by P. Sipe \cite{sipe1, sipe2}. 

Theorem \ref{theorem:contain} will be proved by giving a construction of $\phi_d$ that makes the invariance of $\phi_d$ under $\im(\rho_d)$ transparent. Using a topological interpretation of $n$-spin structures based on the work of S. Humphries-D. Johnson \cite{HJ}, it will then be possible to see how the invariance of $\phi_d$ provides a strictly stronger constraint on $\im(\rho_d)$ than that of Beauville.

The second half of the paper concerns the problem of determining {\em sufficient} conditions for an element $f \in \Mod(\Sigma_g)$ to be contained in $\im(\rho_d)$.

\begin{maintheorem}\label{theorem:d5}
For $d = 5$, there is an equality
\[
\im(\rho_5) = \Mod(\Sigma_6)[\phi_5].
\]
Here $\phi_5 \in H^1(T^{*,1}\Sigma_6; \Z/2\Z)$ is a (classical) spin structure of odd parity, and $\Mod(\Sigma_6)[\phi_5]$ denotes its stabilizer within $\Mod(\Sigma_6)$. 
\end{maintheorem}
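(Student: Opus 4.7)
The plan is to combine Theorem~A, Beauville's calculation, and Johnson's generation of the Torelli group \cite{johnsonfg}, using L\"onne's presentation of $\pi_1(\mathcal{P}_5)$ \cite{lonne} to supply the explicit geometry. The forward containment $\im(\rho_5) \subseteq \Mod(\Sigma_6)[\phi_5]$ is the $d = 5$ case of Theorem~A, so the task is the reverse inclusion. As a first reduction: because $\phi_5$ is a classical mod-$2$ spin structure, the $\Mod(\Sigma_6)$-action on it factors through $\Sp_{12}(\Z/2)$, and hence the Torelli group $\I_6 = \ker \Psi$ lies in $\Mod(\Sigma_6)[\phi_5]$ automatically. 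Combining this with Beauville's theorem \cite{beauville}, which identifies $\Psi(\im(\rho_5))$ with $\Psi(\Mod(\Sigma_6)[\phi_5])$, the short exact sequence
\[
1 \to \I_6 \to \Mod(\Sigma_6)[\phi_5] \to \Psi(\Mod(\Sigma_6)[\phi_5]) \to 1
\]
reduces Theorem~\ref{theorem:d5} to the inclusion $\I_6 \subseteq \im(\rho_5)$.

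To establish this inclusion, I would appeal to Johnson's theorem that for $g \ge 3$, $\I_g$ is generated by bounding pair maps $T_\gamma T_{\gamma'}^{-1}$, and in fact by those of genus one. The spin stabilizer in $\Sp_{12}(\Z)$ acts with finitely many orbits on primitive homology classes, so genus-$1$ BP maps split into finitely many $\Mod(\Sigma_6)[\phi_5]$-orbits. The strategy is to produce at least one BP map of each such orbit type inside $\im(\rho_5)$, and then to use Beauville's surjectivity on the symplectic quotient to conjugate these BP maps through each $\Mod(\Sigma_6)[\phi_5]$-orbit, modulo Torelli-valued corrections that are themselves iteratively absorbed into $\im(\rho_5)$.

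The principal input, and main source of difficulty, is the exhibition of BP maps inside $\im(\rho_5)$. L\"onne's explicit finite presentation of $\pi_1(\mathcal{P}_5)$ gives generators that map under $\rho_5$ to Dehn twists about a combinatorially specified configuration of vanishing cycles on $\Sigma_6$, together with relations of braid-, chain-, and star-type among these twists. Concretely, I would scan for relations that force a product of twists about a chain of vanishing cycles to coincide with a twist about the boundary of an embedded subsurface, pairing two such expressions to yield a genuine bounding pair map inside $\im(\rho_5)$. I expect the main obstacle to be this explicit topological identification: translating L\"onne's abstract presentation into a concrete picture of disjoint simple closed curves on $\Sigma_6$, and extracting from the Dehn-twist calculus a BP map of each required type, is the most labor-intensive step. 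Once this is done, Johnson's generation theorem together with the conjugation argument above completes the proof.
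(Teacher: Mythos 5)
Your reduction is exactly the paper's Step 1: Theorem A gives the forward inclusion, Beauville identifies the symplectic images, and since a classical spin structure is Torelli-invariant the theorem reduces to showing $\mathcal{I}_6 \subseteq \im(\rho_5)$. The problem lies in how you propose to establish that inclusion: your conjugation step is circular. If you have one genus-one bounding pair map $\beta_i \in \im(\rho_5)$ in each $\Mod(\Sigma_6)[\phi_5]$-orbit and want to reach an arbitrary $\beta = g\beta_i g^{-1}$ with $g \in \Mod(\Sigma_6)[\phi_5]$, writing $g = nt$ with $n \in \im(\rho_5)$ and $t \in \mathcal{I}_6$ (which Beauville permits) gives $\beta = n\,(t\beta_i t^{-1})\,n^{-1}$; but $t\beta_i t^{-1}$ is just another genus-one BP map in the same orbit, so the ``Torelli-valued correction'' you must absorb is an instance of the very statement being proved, and the iteration never terminates. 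The paper sidesteps this by using the precise form of Johnson's finite generating set (Theorem \ref{theorem:genset}): it suffices to exhibit the chain maps attached to the specific odd chains $(c_1,\dots,c_k)$ and $(\beta, c_5,\dots,c_k)$ on a fixed chain $c_1,\dots,c_{2g}$ of vanishing cycles, because in Johnson's argument all remaining generators are obtained from these by conjugating with the twists $T_{c_i}$, which are themselves in $\im(\rho_5)$. No orbit analysis in the symplectic group is needed.

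A second gap is the mechanism for producing the BP maps. The chain relation for an odd chain yields $T_{d_1}T_{d_2}$, the \emph{product} of the two boundary twists, not the bounding pair map $T_{d_1}T_{d_2}^{-1}$; converting one into the other requires knowing $T_{d_1}^2 \in \im(\rho_5)$ separately (Lemma \ref{lemma:BP}). Your ``pairing two such expressions'' does not say where these squared boundary twists come from; in the paper this is precisely the role of the new genus-$g$ star relation, which produces $T_b^2 T_y T_z \in \im(\rho_5)$ and hence $T_b^2$ once $z$ is exhibited as a vanishing cycle. You also underestimate the work hidden in translating L\"onne's presentation into a picture: the paper must first prove (Lemma \ref{lemma:unique}) that the braid, commutation, and triangle relations determine the configuration of vanishing cycles on $\Sigma_6$ uniquely up to homeomorphism before any chain boundary can be identified. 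Your outline is the right skeleton, but the two load-bearing steps --- the star relation supplying squared boundary twists, and the rigidity of the L\"onne configuration --- are missing, and the orbit-conjugation shortcut does not substitute for them.
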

Analogous theorems hold for $d = 3, 4$ as well. The case $d = 3$ (where $g=1$) follows immediately from Beauville's computation, in light of the fact that $\Psi$ is an isomorphism $\Psi: \Mod(\Sigma_1) \to SL_2(\Z)$ for $g=1$. This case is also closely related to the work of I. Dolgachev - A. Libgober \cite{DL}. The case $d = 4$ (asserting the surjectivity $\im(\rho_4) = \Mod(\Sigma_3)$) was established by Y. Kuno \cite{kuno}. Kuno's methods are very different from those of the present paper, and make essential use of the fact that the generic curve of genus $3$ is a plane curve of degree $4$. Theorem \ref{theorem:d5} thus treats the first case where planarity is an exceptional property for a curve to possess, and shows that despite this, the monodromy of the family of plane curves of degree $5$ is still very large.

Theorem \ref{theorem:d5} is obtained by a novel combination of techniques from algebraic geometry and the theory of the mapping class group. The starting point is Beauville's work, which allows one to restrict attention to $\im(\rho_5) \cap \mathcal I_6$, where $\mathcal I_6$ is the {\em Torelli group}.\footnote{See Section \ref{subsection:johnson} for the definition of the Torelli group.}

The bridge between algebraic geometry and mapping class groups arises from the work of M. L\"onne \cite{lonne}. The main theorem of \cite{lonne} gives an explicit presentation for the fundamental group of the space $\mathcal P_{n,d}$ of smooth hypersurfaces in $\C P^n$ of degree $d$. Picard-Lefschetz theory allows one to recognize L\"onne's generators as Dehn twists. Theorem \ref{theorem:d5} is then proved by carrying out a careful examination of the configuration of vanishing cycles as simple closed curves on a surface of genus $6$. This analysis is used to exhibit the elements of Johnson's generating set for the Torelli group inside $\im(\rho_5)$. 

In genus $6$, Johnson's generating set has $4470$ elements. In order to make this computation tractable, we find a new relation in $\Mod(\Sigma_g)$ known as the ``genus-$g$ star relation''. Using this, we reduce the problem to eight easily-verified cases. An implicit corollary of the proof is a determination of a simple finite set of Dehn twist generators for the spin mapping class group $\Mod(\Sigma_6)[\phi_5]$. An alternative set of generators was obtained by Hirose \cite[Theorem 6.1]{hirose}.\\

\para{Outline} Section \ref{section:construction} is devoted to the construction of $\phi_d$. In Section \ref{section:consequences}, we recall some work of S. Humphries and D. Johnson that relates $H^1(T^{*,1} \Sigma_g; V)$ for an abelian group $V$ to the notion of a ``generalized winding number function''. We will use this perspective to show that the invariance of $\phi_d$ under $\im(\rho_d)$ provides an obstruction to the surjectivity of $\rho_d$. 

The proof of Theorem \ref{theorem:d5} is carried out in sections \ref{section:mod} through \ref{section:proofB}. Section \ref{section:mod} collects a number of results from the theory of mapping class groups. Section \ref{section:lonne} recalls L\"onne's presentation and establishes some first properties of $\im(\rho_d)$. Section \ref{section:config} continues the analysis of $\im(\rho_d)$. Finally Section \ref{section:proofB} collects these results together to prove Theorem \ref{theorem:d5}.

\para{Acknowledgements} The author would like to thank Dan Margalit for a series of valuable discussions concerning this work. He would also like to thank Benson Farb for alerting him to L\"onne's work and for extensive comments on drafts of this paper, as well as ongoing support in his role as advisor.

\section{$n^{th}$ roots of the canonical bundle and generalized spin structures}\label{section:construction}
\subsection{Plane curves and $\mathcal{P}_d$}\label{subsection:agbasics} By definition, a (projective) {\em plane curve} of degree $d$ is the vanishing locus $V(f)$ in $\C P^2$ of a nonzero homogeneous polynomial $f(x,y,z)$ of degree $d$. The space of all plane curves is identified with $\C P^N$, where $N = {d+2 \choose 2} - 1$. A plane curve $X$ of degree $d$ is {\em smooth} if $X \cong \Sigma_g$ with $g = {d-1 \choose 2}$, and otherwise $X$ is said to be {\em singular}.

	We define the {\em discriminant} as the set
	\[
	\mathcal D_d = \{f \in \C P^N \mid V(f) \mbox{ is singular.} \}.
	\]
	The discriminant $\mathcal D_d$ is the vanishing locus of a polynomial $p_d$ known as the {\em discriminant polynomial}, and is therefore a hypersurface in $\C P^N$. The space of smooth plane curves is then defined as
	\[
	\mathcal P_d = \C P^N \setminus \mathcal D_d. 
	\] 

The {\em universal family of plane curves} is the space $\mathfrak{X}_d \subset \mathcal{P}_d \times \C \P^2$ defined via
\[
\mathfrak{X}_d = \{(f,[x:y:z]) \in \mathcal{P}_d \times \C \P^2 \mid f(x,y,z) = 0 \}.
\]
The projection $\pi: \mathfrak{X}_d \to \mathcal{P}_d$ is the projection map for a $C^\infty$ fiber bundle structure on $\mathfrak{X}_d$ with fibers diffeomorphic to $\Sigma_g$.

\subsection{$n$-spin structures} Let $X$ be a smooth projective algebraic curve over $\C$ and let $K \in \Pic(X)$ denote the canonical bundle.\footnote{Recall that the {\em canonical bundle} is the line bundle whose underlying $\R^2$ bundle is $T^*X$, the cotangent bundle.} Recall that a {\em spin structure} on $X$ is an element $L \in \Pic(X)$ satisfying $L^{\otimes 2} = K$. This admits an obvious generalization.
\begin{definition}\label{definition:genspin}
An $n$-spin structure is a line bundle $L \in \Pic(X)$ satisfying $L^{\otimes n} = K$. 
\end{definition}

Let $T^{*,1}X$ denote the unit cotangent bundle of $X$, relative to an arbitrary Riemannian metric on $X$. Just as ordinary spin structures are closely related to $H^1(T^{*,1}X; \Z/2\Z)$, there is an analogous picture of $n$-spin structures.

\begin{proposition}\label{proposition:nspincover}
Let $L$ be an $n$-spin structure on $X$. Associated to $L$ are
\begin{enumerate}
\item a regular $n$-sheeted covering space $\widetilde{T^{*,1}X} \to T^{*,1}X$ with deck group $\Z/n\Z$, and
\item\label{item:primitive} a cohomology class $\phi_L \in H^1(T^{*,1}X; \Z/n\Z)$ restricting to a generator of the cohomology $H^1(S^1; \Z/n\Z)$ of the fiber of $T^{*,1}X \to X$. 
\end{enumerate}
\end{proposition}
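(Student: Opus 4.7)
The plan is to build both objects directly from the geometry of the line bundle $L$, with the $n$-fold cover being the unit circle bundle of $L$ itself and the power-of-$n$ map playing the role of the covering map.

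First, fix a Hermitian metric on $L$ and let $S(L) \subset L$ denote the associated unit circle bundle. Choose compatible Hermitian metrics on the tensor powers so that the iterated tensor map $\mu_n : L \to L^{\otimes n}$, defined fiberwise by $v \mapsto v^{\otimes n}$, restricts to the fiberwise $n$-th power map $S(L) \to S(L^{\otimes n})$. Now fix an isomorphism $L^{\otimes n} \cong K$ (unique up to $\C^*$, and hence up to an $S^1$-action after normalizing metrics). After choosing the metric on $K = T^* X$ induced by a Riemannian metric on $X$, this identifies $S(L^{\otimes n})$ with $T^{*,1} X$, and so $\mu_n$ descends to a bundle map
\[
p : S(L) \to T^{*,1} X
\]
which fiberwise is the standard $n$-sheeted covering $S^1 \to S^1$, $z \mapsto z^n$. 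The group $\mu_n \subset S^1$ of $n$-th roots of unity acts on $S(L)$ by fiberwise rotation, freely and transitively on each fiber of $p$, so $p$ is a regular cyclic cover with deck group $\Z/n\Z$; defining $\widetilde{T^{*,1} X} := S(L)$ establishes (1).

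For (2), the regular cyclic cover $p$ corresponds (by the classification of principal $\Z/n\Z$-bundles, equivalently by the composition $\pi_1(T^{*,1} X) \twoheadrightarrow \Z/n\Z \hookrightarrow \Q/\Z$) to a cohomology class $\phi_L \in H^1(T^{*,1} X; \Z/n\Z)$. To verify the fiber condition, restrict the cover $p$ to a fiber $S^1 \subset T^{*,1} X$ over a point $x \in X$: on this fiber $p$ is, by construction, the connected $n$-fold cover $S^1 \to S^1$ given by $z \mapsto z^n$. Under the identification $H^1(S^1; \Z/n\Z) \cong \Z/n\Z$, the class classifying a connected $n$-fold cyclic cover is a generator, so $\phi_L|_{S^1}$ generates $H^1(S^1; \Z/n\Z)$ as required.

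There is no serious obstacle here; the only point that deserves care is the choice-independence of the construction. The isomorphism $L^{\otimes n} \cong K$ is unique up to a nonzero scalar, and the choices of Hermitian metrics change $p$ only by precomposition and postcomposition with rotations in the $S^1$-fibers. Such rotations are isotopic to the identity as bundle automorphisms, so they act trivially on cohomology, and hence the class $\phi_L$ is well-defined independently of these auxiliary choices.
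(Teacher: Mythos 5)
Your proof is correct and follows essentially the same route as the paper: the $n$-th power map on the (unit) circle bundle of $L$ realizes the cover, the deck group comes from fiberwise multiplication by $n$-th roots of unity, the class $\phi_L$ arises from the Galois correspondence, and the fiberwise restriction is the connected $n$-fold cover of $S^1$. The extra care you take with Hermitian metrics and the well-definedness of the isomorphism $L^{\otimes n}\cong K$ is a reasonable refinement but not a different argument.
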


\begin{proof}
In view of the equality $L^{\otimes n} = K$ in $\Pic(X)$, taking $n^{th}$ powers in the fiber induces a map $\mu: L \to K$. Let $L^\circ$ denote the complement of the zero section in $L$, and define $K^\circ$ similarly. Then $\mu: L^\circ \to K^\circ$ is an $n$-sheeted covering space with deck group $\Z/n\Z$ induced from the multiplicative action of the $n^{th}$ roots of unity. The covering space $\widetilde{T^{*,1}X} \to T^{*,1}X$ is obtained from $L^\circ \to K^\circ$ by restriction.

As $\widetilde{T^{*,1}X} \to T^{*,1}X$ is a regular cover with deck group $\Z/n\Z$, the Galois correspondence for covering spaces asserts that $\widetilde{T^{*,1}X}$ is associated to some homomorphism $\phi_L: \pi_1(T^{*,1}X) \to \Z/n\Z$. This gives rise to a class, also denoted $\phi_L$, in $H^1(T^{1,*}X; \Z/n\Z)$. On a given fiber of $T^{*,1}X \to X$, the covering $\widetilde{T^{*,1}X}\to T^{*,1}X$ restricts to an $n$-sheeted cover $S^1 \to S^1$; this proves the assertion concerning the restriction of $\phi_L$ to $H^1(S^1; \Z/n\Z)$. 
\end{proof}

Our interest in $n$-spin structures arises from the fact that degree-$d$ plane curves are equipped with a canonical $(d-3)$-spin structure.
\begin{fact}\label{fact:spin}
Let $X$ be a smooth degree-$d$ plane curve, $d \ge 3$. The canonical bundle $K \in \Pic(X)$ is induced from the restriction of $\mathcal{O}(d-3) \in \Pic(\C\P^2)$. Consequently, $\mathcal{O}(1)$ determines a $(d-3)$-spin structure on $X$ for $d \ge 4$. 
\end{fact}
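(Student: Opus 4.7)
The plan is to derive this directly from the adjunction formula for smooth hypersurfaces in projective space. Recall that if $X \subset Y$ is a smooth hypersurface cut out by a section of a line bundle $\mathcal{L} \to Y$, then
\[
K_X = (K_Y \otimes \mathcal{L})|_X.
\]
I would first specialize this to $Y = \C\P^2$ with $\mathcal L = \mathcal{O}(d)$, the line bundle whose sections are homogeneous degree-$d$ polynomials in $x,y,z$. Since $X$ is defined as the zero locus of such a polynomial, this identifies $\mathcal O(X) = \mathcal O(d)$.

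Next I would invoke the standard calculation $K_{\C\P^2} = \mathcal{O}(-3)$ (for instance from the Euler sequence $0 \to \mathcal{O} \to \mathcal{O}(1)^{\oplus 3} \to T\C\P^2 \to 0$, which gives $\det T\C\P^2 = \mathcal{O}(3)$ and hence $K_{\C\P^2} = \mathcal O(-3)$). Substituting into the adjunction formula yields
\[
K_X = \mathcal{O}(d-3)|_X,
\]
which is the first assertion.

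For the second assertion, I would simply observe that $\mathcal{O}(d-3)|_X = (\mathcal{O}(1)|_X)^{\otimes(d-3)}$ in $\Pic(X)$, so $L := \mathcal{O}(1)|_X$ satisfies $L^{\otimes(d-3)} = K_X$. For $d \ge 4$ the exponent $d-3 \ge 1$ is a positive integer, so by Definition \ref{definition:genspin} this is precisely a $(d-3)$-spin structure on $X$.

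There is no real obstacle here; the fact is essentially a one-line application of adjunction, and the only thing to keep straight is the convention $K_{\C\P^2} = \mathcal O(-3)$ versus $\mathcal O(3)$. The content of the statement is really that the $(d-3)$-spin structure produced this way is canonical, in the sense that it comes from the ambient geometry of $\C\P^2$ rather than from a choice intrinsic to $X$; this canonicity is what will later be exploited in Section \ref{section:construction} to show that the associated class $\phi_d \in H^1(T^{*,1}\Sigma_g; \Z/(d-3)\Z)$ is preserved by the monodromy $\rho_d$.
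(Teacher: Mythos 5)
Your proof is correct and is the standard adjunction argument; the paper states this as a \emph{Fact} without proof precisely because this computation ($K_X = (K_{\C\P^2}\otimes \mathcal O(d))|_X = \mathcal O(d-3)|_X$ via the Euler sequence) is the expected justification. Your closing remark about canonicity also correctly identifies why the fact matters for the monodromy-invariance of $\phi_d$ later on.
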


Let $\varpi: \mathfrak{X}_d \to \C \P^2$ denote the projection onto the second factor. Then $\varpi^*(\mathcal{O}(d-3)) \in \Pic(\mathfrak{X}_d)$ restricts to the canonical bundle on each fiber, and $\varpi^*(\mathcal{O}(1))$ determines a $(d-3)$-spin structure. Let $T^{*,1}\mathfrak{X}_d$ denote the $S^1$-bundle over $\mathfrak{X}_d$ for which the fiber over $x \in X$ consists of the unit cotangent vectors $T^{*,1}_x X$. 

\begin{definition}\label{definition:phi}
The cohomology class 
\[
\phi_d \in H^1(T^{*,1}\mathfrak{X}_d; \Z/(d-3)\Z)
\]
is obtained by applying the construction of Proposition \ref{proposition:nspincover} to the pair of line bundles $\varpi^*(\mathcal{O}(1))$, \newline $\varpi^*(\mathcal{O}(d-3)) \in \Pic(\mathfrak{X}_d)$. 
\end{definition}

\section{Generalized winding numbers and obstructions to surjectivity}\label{section:consequences}
In this section, we show that the existence of $\phi_d$ gives rise to an obstruction for a mapping class $f \in \Mod(\Sigma_g)$ to be contained in $\im(\rho_d)$. For any system of coefficients $V$, there is a natural action of $\Mod(\Sigma_g)$ on $H^1(T^{*,1}\Sigma_g; V)$ which extends the action of $\Mod(\Sigma_g)$ on $H^1(\Sigma_g; V)$ via $\Psi$. To prove Theorem \ref{theorem:contain}, it therefore suffices to show that the stabilizer $\Mod(\Sigma_g)[\phi_d]$ of each nonzero element of $H^1(T^{*,1}\Sigma_g; \Z/(d-3)\Z)$ is not the full group $\Psi^{-1}(\Psi(\im(\rho_d)))$. 

The natural setting for what follows is in the unit {\em tangent} bundle of a surface, which we write $T^1\Sigma$. Of course, a choice of Riemannian metric on $\Sigma$ identifies $T^1 \Sigma$ with $T^{*,1} \Sigma$, and a choice of metric in each fiber identifies $T^{*,1} \mathfrak{X}_d$ with the ``vertical unit tangent bundle'' $T^1 \mathfrak{X}_d$; we will make no further comment on these matters.

The basis for our approach is the work of Humphries-Johnson \cite{HJ}, which gives an interpretation of $H^1(T^1 \Sigma_g; V)$ as the space of ``$V$-valued generalized winding number functions''. A basic notion here is that of a {\em Johnson lift}. For our purposes, a {\em simple closed curve} is a $C^1$-embedded $S^1$-submanifold.

\begin{definition}\label{definition:johnsonlift}
Let $a$ be a simple closed curve on the surface $\Sigma$ given by a unit-speed $C^1$ embedding $a: S^1 \to \Sigma$. A choice of orientation on $S^1$ induces an orientation on $a$, as well as providing a coherent identification $T^1_xS^1 = \{-1,1\}$ for each $x \in S^1$. The {\em Johnson lift} of $a$, written $\vec a$, is the map $\vec a: S^1 \to T^1 \Sigma$ given by 
\[
\vec a (t) = (a(t), D_ta(1)).
\]
That is, the Johnson lift of $a$ is simply the curve in $T^1\Sigma$ induced from $a$ by tracking the tangent vector. 
\end{definition}
The Johnson lift allows for the evaluation of elements of $H^1(T^1\Sigma;V)$ on simple closed curves. Let $\Sigma$ be a surface, $V$ an abelian group, and $\alpha \in H^1(T^1 \Sigma; V)$ a cohomology class. Let $a$ be a simple closed curve. By an abuse of notation, we write $\alpha(a)$ for the evaluation of $\alpha$ on the $1$-cycle determined by the Johnson lift $\vec a$. In this context we call $\alpha$ a ``generalized winding number function''.\footnote{The terminology ``generalized winding number'' is inspired by the fact that the twist-linearity property was first encountered in the context of computing winding numbers of curves on surfaces relative to a vector field.}  In \cite{HJ}, it is shown that this pairing satisfies the following properties:
\begin{theorem}[Humphries-Johnson]\label{theorem:hj}
\ 
\begin{enumerate}[(i)]
\item The evaluation $\alpha(a) \in V$ is well-defined on the isotopy class of $a$. 
\item (Twist-linearity) If $b$ is another simple closed curve and $T_b$ denotes the Dehn twist about $b$, then $\alpha$ is ``twist-linear'' in the following sense:
\begin{equation}\label{equation:TL}
\alpha(T_b(a)) = \alpha(a) + \pair{a,b} \alpha(b),
\end{equation}
where $\pair{a,b}$ denotes the algebraic intersection pairing. 

\item Let $\zeta$ be a curve enclosing a small null-homotopic disk on $\Sigma$, and let $S \subset \Sigma$ be a subsurface with boundary components $b_1, \dots, b_k$. If each $b_i$ is oriented so that $S$ is on the left and $\zeta$ is oriented similarly, then 
\begin{equation}\label{equation:chi}
\alpha(b_1) + \dots +\alpha(b_k) = \chi(S) \alpha(\zeta),
\end{equation} 
where $\chi(S)$ is the Euler characteristic of $S$. 
\end{enumerate}
\end{theorem}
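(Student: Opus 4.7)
The plan is to prove all three statements by reducing them to identities in $H_1(T^1\Sigma;\Z)$, since the evaluation of $\alpha$ on a $1$-cycle depends only on its homology class. For (i), a $C^1$-isotopy $\{a_t\}$ from $a$ to $a'$ induces a continuous family of Johnson lifts $\vec{a_t} : S^1 \to T^1\Sigma$, providing a homotopy of $1$-cycles; the evaluation of $\alpha$ is therefore constant in $t$.

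For (ii), I would reduce the statement to the homological identity
\[
[\vec{T_b(a)}] = [\vec a] + \pair{a,b}\,[\vec b] \quad \text{in } H_1(T^1\Sigma;\Z),
\]
from which twist-linearity follows by applying $\alpha$. After isotoping $a$ into minimal position with $b$, the curves $a$ and $T_b(a)$ agree outside a tubular neighborhood $A \cong S^1\times[0,1]$ of $b$, so the difference of their Johnson lifts is supported over $A$. Near each intersection point $p\in a\cap b$, the twist replaces a short transverse arc of $a$ by one that runs once around $b$ in a direction determined by the sign of $p$ in $\pair{a,b}$; the Johnson lift of this wrapping arc is homologous rel endpoints to the original lift plus $\pm[\vec b]$. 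Summing over intersection points yields the identity.

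For (iii), first observe that if $\zeta$ bounds a null-homotopic disk with the disk on its left, then $\vec{\zeta}$ is homologous to the fiber class $[F]$ of $T^1\Sigma\to\Sigma$, since under a local trivialization the tangent vector makes exactly one positive rotation along $\zeta$. It therefore suffices to prove
\[
[\vec{b_1}]+\cdots+[\vec{b_k}] = \chi(S)\,[F] \quad \text{in } H_1(T^1\Sigma;\Z).
\]
Since $S$ deformation retracts onto a graph, the circle bundle $T^1 S\to S$ is trivial, and a nonvanishing vector field $v$ on $S$ determines a section $s:S\to T^1S$. The image $s(S)$ is a $2$-chain with $\partial s(S)=\sum_i s(b_i)$, so $\sum_i[s(b_i)]=0$ in $H_1(T^1\Sigma;\Z)$. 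Each difference $[\vec{b_i}]-[s(b_i)]$ is a multiple of $[F]$ equal to the winding number of the unit tangent of $b_i$ relative to $v$, and by the Poincar\'e--Hopf theorem for surfaces with boundary, these windings sum to $\chi(S)$.

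The main technical obstacle will be the orientation and tangent-rotation bookkeeping in (ii): one must verify that the rotation contributed by the wrapping arc of $T_b(a)$ in $T^1A$ cancels against the rotation of the arc of $a$ it replaces, leaving exactly the predicted multiple of $[\vec b]$ with no extraneous fiber contribution. A careful local picture in $T^1 A$ of the Johnson lifts of $a$, $b$, and $T_b(a)$ at a single crossing disposes of this, and the global statement then follows by linearity over intersection points.
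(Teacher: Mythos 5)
Your proposal is correct, but note that the paper does not actually prove this statement: it is quoted as Theorem \ref{theorem:hj} directly from Humphries--Johnson \cite{HJ}, with the remark that \cite{HJ} in fact proves more (every twist-linear $V$-valued function arises from $H^1(T^1\Sigma;V)$). What you have written is a self-contained sketch of the standard argument, organized around the two homological identities $[\vec{T_b(a)}]=[\vec a]+\pair{a,b}[\vec b]$ and $[\vec{b_1}]+\dots+[\vec{b_k}]=\chi(S)[F]$ in $H_1(T^1\Sigma;\Z)$, from which (i)--(iii) follow by the Kronecker pairing; this is a reasonable and more explicit route than \cite{HJ}'s, whose emphasis is on the classification of twist-linear forms rather than on these evaluation formulas. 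Two points you flag or elide deserve the care you promise: in (ii), the cleanest local picture is that $T_b(a)$ is isotopic to the oriented resolution of $a\cup b$ at each crossing, and at an oriented resolution the two new corners turn through opposite angles, so no net fiber class is created; and when $a$ and $b$ have crossings of both signs you need $[\vec{\bar b}]=-[\vec b]$, which holds because orientation reversal composes the reversed loop with the fiberwise antipodal map, and that map is isotopic to the identity through fiberwise rotations. In (iii) your use of a section coming from a nonvanishing vector field on $S$ plus Poincar\'e--Hopf for the boundary winding numbers is exactly right (and checks out on the disk, annulus, and pair of pants). So there is no gap; the argument is the standard one, supplied where the paper gives only a citation.
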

\begin{remark}
Humprhies-Johnson in fact establish much more: they show that {\em every} $V$-valued twist-linear function arises as a class $\alpha \in H^1(T^1\Sigma; V)$. For what follows we only need the results of Theorem \ref{theorem:hj}. 
\end{remark}

\begin{proof}[Proof of Theorem \ref{theorem:contain}]
Consider the class $\phi_d \in H^1(T^{*,1}\mathfrak{X}_d; \Z/(d-3)\Z)$. The above discussion implies that on a given fiber $X$ of $\mathfrak{X}_d \to \mathcal{P}_d$, the restriction of $\phi_d$ determines a generalized winding number function; we write $\alpha_d \in H^1(T^1X; \Z/(d-3)\Z)$ for this class. Since $\alpha_d$ is induced from the globally-defined form $\phi_d$, it follows that $\alpha_d$ is monodromy-invariant: if $f \in \im(\rho_d)$, then for any simple closed curve $a$ on $X$, the equation
\begin{equation}\label{equation:invariant}
\alpha_d(f(a)) = \alpha_d(a)
\end{equation}
must hold. Consequently,
\[
\im(\rho_d) \subseteq \Mod(\Sigma_g)[\phi_d]
\]
as claimed.

We wish to exhibit a nonseparating simple closed curve $b$ for which $\alpha_d(b) \ne 0$. Given such a $b$, there is another simple closed curve $a$ satisfying $\pair{a,b} = 1$. Then the twist-linearity condition (\ref{equation:TL}) will show that
\[
\alpha_d(T_b(a)) = \alpha_d(a) + \alpha_d(b) \ne \alpha_d(a);
\]
this contradicts (\ref{equation:invariant}). It follows that the Dehn twist $T_b$ for such a curve cannot be contained in $\Mod(\Sigma_g)[\phi_d]$. 

In the case when $d$ is even, when $\Psi^{-1}(\Psi(\im(\rho_d))) = \Mod(\Sigma_g)$, this will prove Theorem \ref{theorem:contain}. For $d$ odd, there is an additional complication. Here, the class $\frac{d-3}{2} \phi_d \in H^1(T^{*,1}\mathfrak{X}_d; \Z/2\Z)$ determines an ordinary spin structure, and according to Beauville, the group $\Psi(\Mod(\Sigma_g)[\phi_d])$ is the stabilizer of $\frac{d-3}{2}\phi_d$ in $\Sp(2g,\Z)$. We must therefore exhibit a curve $b$ for which $\alpha_d(b)$ is nonzero and $\frac{d-3}{2}$-torsion. Equation (\ref{equation:TL}) shows that such a curve {\em does} stabilize the spin structure $\frac{d-3}{2} \phi_d$, but not the refinement to a $(d-3)$-spin structure $\phi_d$. 

It remains to exhibit a suitable curve $b$. It follows easily from the twist-linearity condition (\ref{equation:TL}) that given any subsurface $S \subset X$ of genus $1$ with one boundary component, there is some (necessarily nonseparating) curve $c$ contained in $S$ with $\alpha_d(c) = 0$. Let $S_1,S_2,S_3$ be a collection of mutually-disjoint subsurfaces of genus $1$ with one boundary component, and let $c_1, c_2, c_3$ be curves satisfying $\alpha_d(c_i) = 0$, and for which $c_i$ is contained in $S_i$ (recall that $d \ge 6$ and so the genus of $X$ is $g \ge 10$). Choose $b$ disjoint from all $c_i$ so that the collection of curves $b, c_1,c_2, c_3$ encloses a subsurface $\Sigma$ homeomorphic to a sphere with $4$ boundary components. From (\ref{equation:chi}) and the construction of the $c_i$, it follows that when $b$ is suitably oriented, it satisfies 
\[
\alpha_d(b) = \chi(\Sigma)\alpha_d(\zeta) = -2\alpha_d(\zeta).
\]
Recall that by Proposition \ref{proposition:nspincover}.\ref{item:primitive}, the element $\alpha_d(\zeta) \in \Z/(d-3)\Z$ is primitive. Thus $\alpha_d(b) \ne 0$ for any $d$, but is $\frac{d-3}{2}$-torsion when $d$ is odd, as required.
\end{proof}

\section{Results from the theory of the mapping class group}\label{section:mod}
	We turn now to the proof of Theorem \ref{theorem:d5}. From this section onwards, we adopt the conventions and notations of the reference \cite{FM}. In particular, the {\em left-handed} Dehn twist about a curve $c$ is written $T_c$, and the geometric intersection number between curves $a,b$ is written $i(a,b)$. We pause briefly to establish some further conventions. We will often refer to a simple closed curve as simply a ``curve'', and will often confuse the distinction between a curve and its isotopy class. Unless otherwise specified, we will assume that all intersections between curves are essential. 
	\subsection{The change-of-coordinates principle}\label{subsection:ccp}
	The {\em change-of-coordinates principle} roughly asserts that if two configurations of simple closed curves and arcs on a surface have the same intersection pattern, then there is a homeomorphism taking one configuration to the other. There are many variants of the change-of-coordinates principle, all based on the classification of surfaces. See the discussion in \cite[Section 1.3.2]{FM}.
	
	\para{Basic principle} Suppose $c_1, \dots, c_n$ and $d_1, \dots, d_n$ are configurations of curves on a surface $S$ all meeting transversely. The surface $\overline{S \setminus \{c_i\}}$ has a labeling on segments of its boundary, corresponding to the segments of the curves $c_i$ from which the boundary component arises. Suppose there is a homeomorphism 
	\[
	f: \overline{S \setminus \{c_i\}} \to \overline{S \setminus \{d_i\}}
	\]
	taking every boundary segment labeled by $c_i$ to the corresponding $d_i$ segment. Then $f$ can be extended to a homeomorphsim $f: S \to S$ taking the configuration $c_i$ to $d_i$.\\

	We illustrate this in the case of {\em chains}. 	
	\begin{definition}\label{definition:chain}
	Let $S$ be a surface. A {\em chain} on $S$ of length $k$ is a collection of curves $(c_1, \dots, c_k)$  for which the geometric intersection number $i(c_i, c_j)$ is $1$ if $\abs{i-j} = 1$ and $0$ otherwise. If $C = (c_1, \dots, c_k)$ is a chain, the {\em boundary} of $C$, written $\partial C$, is defined to be the boundary of a small regular neighborhood of $c_1 \cup \dots \cup c_k$. When $k$ is even, $\partial C$ is a single (necessarily separating) curve, and when $k$ is odd, $\partial C = d_1 \cup d_2$ consists of two curves $d_1, d_2$ whose union separates $S$.
	\end{definition}
	
	\begin{lemma}[Change-of-coordinates for chains]\label{lemma:chainCCP}
	Let $(c_1, \dots, c_{k})$ and $(d_1, \dots, d_{k})$ be chains of even length $k$ on a surface $S$. Then there is a homeomorphism $f: S \to S$ for which $f(c_i) = d_i, 1\le i \le k$. 
	\end{lemma}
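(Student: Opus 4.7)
The plan is to apply the basic change-of-coordinates principle to the entire chain at once: construct a label-preserving homeomorphism $\overline{S \setminus \bigcup c_i} \to \overline{S \setminus \bigcup d_i}$ and then appeal to the principle to extend it to $S$. The natural strategy is to split $S$ into the regular neighborhood of the chain and its complement, handle each separately, and glue along the common boundary.

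First I would analyze the regular neighborhood $N_C$ of $c_1 \cup \cdots \cup c_k$. Because $k$ is even, Definition \ref{definition:chain} yields $\partial N_C = \partial C$, a single separating curve. Computing the Euler characteristic from the graph $\bigcup c_i$ (which has $k-1$ vertices and $2k-2$ edges, since the two end curves contribute one arc each and the $k-2$ middle curves contribute two) gives $\chi(N_C) = 1 - k$, so $N_C$ is a genus-$k/2$ surface with one boundary component. The same analysis applies to $N_D$. A chain of length $k$ in such a neighborhood is topologically rigid: the pair $(N_C, c_1 \cup \cdots \cup c_k)$ has a standard model, which one verifies by induction on $k$, with base case $k=2$ (a torus-with-hole containing two simple closed curves meeting once). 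This yields a homeomorphism $\varphi_{\mathrm{in}}: N_C \to N_D$ with $\varphi_{\mathrm{in}}(c_i) = d_i$ for each $i$.

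Next I would handle the complement. Since $\chi(\overline{S \setminus N_C}) = \chi(S) - \chi(N_C) = 1 + k - 2g$ and this subsurface has a single boundary component, it is a genus-$(g-k/2)$ surface with one boundary, and the same is true for $\overline{S \setminus N_D}$; hence they are abstractly homeomorphic. Pick any homeomorphism $\varphi_{\mathrm{out}}: \overline{S \setminus N_C} \to \overline{S \setminus N_D}$. To glue $\varphi_{\mathrm{in}}$ to $\varphi_{\mathrm{out}}$ along $\partial N_C$, observe that their boundary restrictions differ by a self-homeomorphism of $S^1$; this discrepancy can be absorbed by modifying $\varphi_{\mathrm{out}}$ on a collar of $\partial N_C$, with the modification being the identity on the opposite boundary of the collar. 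The combined map is a homeomorphism $S \to S$ carrying each $c_i$ to $d_i$.

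The main obstacle is the rigidity claim --- that a chain of length $k$ in a genus-$k/2$ one-holed surface admits a unique standard model up to homeomorphism. This is where the parity hypothesis enters: an odd-length chain has a regular neighborhood with two boundary components rather than one, altering the combinatorial picture. Once rigidity is established (for instance, inductively by cutting along $c_1$ and invoking the principle on the resulting lower-genus surface, now equipped with an arc and a shorter chain), the rest of the argument is a routine combination of the classification of surfaces and the basic change-of-coordinates principle.
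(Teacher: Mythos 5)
Your argument is correct and is essentially the standard one: the paper's own proof is just a citation to Farb--Margalit, Section 1.3.2, and the argument there likewise comes down to the rigidity of the chain inside its regular neighborhood (equivalently, the fact that cutting along the chain yields a combinatorially determined labeled surface) together with the classification of surfaces applied to the single complementary piece, exactly as you describe. Your Euler characteristic bookkeeping and your identification of the even-length hypothesis as the reason $\partial N_C$ is a single circle (so that the complement is connected and the gluing along one boundary curve goes through) match the intended proof; the only point worth making explicit is that when absorbing the boundary discrepancy into a collar one should first arrange that $\varphi_{\mathrm{in}}$ and $\varphi_{\mathrm{out}}$ induce the same orientation on $\partial N_D$.
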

	\begin{proof}
	See \cite[Section 1.3.2]{FM}.
	\end{proof}

	\subsection{Some relations in the mapping class group}
	\begin{proposition}[Braid relation]\label{proposition:braid}
	Let $S$ be a surface, and $a,b$ curves on $S$ satisfying $i(a,b) = 1$. Then
	\begin{equation}\label{equation:braid}
	T_a T_b T_a = T_b T_a T_b.
	\end{equation}
	On the level of curves,
	\[
	T_a T_b (a) = b.
	\]
	Any such $a,b$ are necessarily non-separating.
	
	Conversely, if $a,b$ are curves on $S$ in distinct isotopy classes that satisfy the braid relation (\ref{equation:braid}), then $i(a,b) = 1$. 
	\end{proposition}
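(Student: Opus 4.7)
The plan is to prove the four assertions (non-separation, the identity $T_aT_b(a)=b$, the braid relation, and the converse) in turn, with the converse being the only substantive step.

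For non-separation, I would observe that $i(a,b) = 1$ forces the algebraic intersection $\pair{[a],[b]}$ to be $\pm 1$; since any separating curve pairs trivially with every class in $H_1(S)$, neither $a$ nor $b$ can be separating. For the identity $T_aT_b(a) = b$, I would invoke the change-of-coordinates principle: a regular neighborhood $N$ of $a \cup b$ is a genus-one surface with one boundary component, and $(a,b)$ is carried by a homeomorphism of $N$ to the standard meridian--longitude pair, where a direct picture shows $T_b(a)$ has geometric intersection one with $a$ and that $T_a$ carries it to $b$. The braid relation then follows by applying the standard twist-conjugation formula $T_{f(c)} = fT_cf^{-1}$ with $f = T_aT_b$ and $c = a$: using $T_aT_b(a) = b$ this becomes $T_b = (T_aT_b)T_a(T_aT_b)^{-1} = T_aT_bT_aT_b^{-1}T_a^{-1}$, which rearranges to $T_aT_bT_a = T_bT_aT_b$.

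The main obstacle is the converse. Assume the braid relation holds and $a, b$ are distinct isotopy classes. Reading the twist-conjugation identity backwards, the braid relation is exactly equivalent to $T_{T_aT_b(a)} = T_b$, so using that a Dehn twist determines its core curve up to isotopy, we conclude $T_aT_b(a) \simeq b$, hence $T_b(a) \simeq T_a^{-1}(b)$. I would now take geometric intersection with $a$ of both sides. The right-hand side gives $i(T_a^{-1}(b), a) = i(a,b)$ since $T_a$ fixes $a$ up to isotopy. For the left-hand side, I would invoke the Farb--Margalit intersection estimate
\[
\bigl| i(T_c^k(x), y) - |k|\, i(c,x)\, i(c,y) \bigr| \le i(x,y),
\]
applied with $c = b$, $k = 1$, and $x = y = a$; since $i(a,a) = 0$, this collapses to $i(T_b(a), a) = i(a,b)^2$. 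Comparing the two sides yields $i(a,b)^2 = i(a,b)$, so $i(a,b) \in \{0,1\}$. The case $i(a,b) = 0$ is ruled out because then $T_b(a) \simeq a$, giving $T_aT_b(a) = T_a(a) = a \simeq b$, which contradicts the distinctness of $a$ and $b$. Hence $i(a,b) = 1$.

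The main obstacle is thus producing the sharp equality $i(T_b(a),a) = i(a,b)^2$; everything else is a manipulation of the twist-conjugation formula combined with the fact that Dehn twists are faithful invariants of isotopy classes of essential simple closed curves.
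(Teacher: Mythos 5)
Your proof is correct and is essentially the argument the paper delegates to Farb--Margalit (Propositions 3.11 and 3.13 of \cite{FM}): change of coordinates for the identity $T_aT_b(a)=b$, the twist-conjugation formula $T_{f(c)}=fT_cf^{-1}$ for the braid relation, and for the converse the equivalence of the braid relation with $T_{T_aT_b(a)}=T_b$ combined with the formula $i(T_b(a),a)=i(a,b)^2$. Since the paper's ``proof'' is only a citation, your write-up simply supplies the standard details; nothing is missing.
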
	
	\begin{proof}
	See \cite[Proposition 3.11]{FM} for the proof of the first assertion, and \cite[Proposition 3.13]{FM} for the second.
	\end{proof}

	\para{The chain relation}   The {\em chain relation} relates Dehn twists about curves in a chain to Dehn twists around the boundary. We will require a slightly less well-known form of the chain relation for chains of odd length; see \cite[Section 4.4.1]{FM} for details.
	\begin{proposition}[Chain relation]\label{proposition:chain}
	Let $C = (c_1, \dots, c_k)$ be a chain with $k$ odd. Let $d_1, d_2$ denote the components of $\partial C$. Then the following relation holds:
	\[
	(T_{c_1}^2 T_{c_2} \dots T_{c_k})^k = T_{d_1} T_{d_2}.
	\]
	\end{proposition}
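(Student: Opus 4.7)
The plan is to reduce to a local picture on the regular neighborhood of the chain and then use a branched-cover argument to pass to the braid group of a punctured disk, where the relation follows from the classical description of the center.

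First I would observe that the statement is local: both sides are supported in (an arbitrarily small neighborhood of) the subsurface $S_C = N(c_1 \cup \cdots \cup c_k) \subset S$, so by the inclusion homomorphism $\Mod(S_C) \to \Mod(S)$ it suffices to prove the relation in $\Mod(S_C)$. For $k$ odd, $S_C$ is a surface of genus $(k-1)/2$ with two boundary components $d_1$ and $d_2$; this is the only topological input needed from Definition \ref{definition:chain}, and by Lemma \ref{lemma:chainCCP} any two such chains on $S_C$ are equivalent under a homeomorphism, so it is enough to exhibit a single model in which the computation can be carried out.

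Next I would use the standard hyperelliptic structure on $S_C$. There is an involution $\iota \colon S_C \to S_C$ whose quotient is a disk $D$ with $k+2$ marked points $p_0, \ldots, p_{k+1}$, such that each $c_i$ is the preimage of an arc $\alpha_i$ joining $p_{i-1}$ to $p_i$ (with the outer two marked points lying in the region whose preimage contains $d_1, d_2$). Under the resulting homomorphism $\Mod(S_C, \partial) \to \Mod(D, \{p_i\})$ to the mapping class group of the marked disk (equivalently the braid group $B_{k+2}$), the Dehn twist $T_{c_i}$ descends to the half-twist $\sigma_i$ about $\alpha_i$, while $T_{d_1} T_{d_2}$ descends to the full boundary twist of $D$. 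Here one must be slightly careful because the kernel is generated by $\iota$; but $\iota$ is central and the argument is to verify the relation after descending and then lift.

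The main step is then the braid-group identity: in $B_{k+2}$, the element $(\sigma_1^2 \sigma_2 \cdots \sigma_{k+1})^{k+1}$ (respectively the form written in the statement, depending on how one labels the arcs) equals the full twist generating the center of $B_{k+2}$, which is precisely the image of $T_{d_1}T_{d_2}$. This is a classical calculation using the braid relations $\sigma_i \sigma_{i+1} \sigma_i = \sigma_{i+1} \sigma_i \sigma_{i+1}$ and commutation of far-apart generators, and the identification of the center of $B_{k+2}$ with the subgroup generated by the full twist. Lifting this equality through the double cover gives the desired relation $(T_{c_1}^2 T_{c_2} \cdots T_{c_k})^k = T_{d_1} T_{d_2}$ in $\Mod(S_C)$, and hence in $\Mod(S)$.

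The main obstacle is bookkeeping: one must carefully keep track of which boundary of $D$ lifts to $d_1 \cup d_2$ versus to a single curve, and verify that the kernel element $\iota$ does not contribute (it doesn't, because the two sides agree on $H_1$ and $\iota$ acts as $-\id$, but one checks this concretely). Once the branched-cover picture is set up, the heavy lifting is done by the braid relations, as detailed in \cite[Section 4.4.1]{FM}.
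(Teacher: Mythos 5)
The paper gives no proof of this proposition at all---it simply points to \cite[Section 4.4.1]{FM}---and the argument you outline (restrict to the regular neighborhood $S_C \cong \Sigma_{(k-1)/2,2}$, pass to the hyperelliptic double branched cover of a marked disk, lift the identification of the full twist with a power of $\sigma_1^2\sigma_2\cdots$ from the braid group) is precisely the standard proof that reference supplies. So your strategy is the right one, but there are two points to repair. First, the count of branch points is off by one, and this is not a labeling convention: $\chi(\Sigma_{(k-1)/2,2}) = 1-k$, while a double cover of the disk branched at $n$ points has Euler characteristic $2-n$, so $n = k+1$. The $k$ curves $c_i$ are the preimages of the $k$ arcs joining consecutive branch points $p_0,\dots,p_k$, the relevant braid group is $B_{k+1}$, and the identity you need is $(\sigma_1^2\sigma_2\cdots\sigma_k)^k = (\sigma_1\sigma_2\cdots\sigma_k)^{k+1} = \Delta^2$ in $B_{k+1}$; only with $n=k+1$ does the exponent $k$ match the statement. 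Moreover, with $k+1$ branch points and $k$ odd, the monodromy around $\partial D$ is a product of an even number of transpositions, hence trivial---this is exactly why $\partial S_C$ has two components and why $T_{\partial D}$ lifts to $T_{d_1}T_{d_2}$. With $k+2$ branch points the boundary monodromy would be nontrivial and $\partial D$ would have connected preimage, contradicting the picture you are using.

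Second, the worry about the kernel $\langle\iota\rangle$ is unnecessary in this bounded setting. The deck transformation $\iota$ interchanges $d_1$ and $d_2$, so it does not fix $\partial S_C$ pointwise, and the Birman--Hilden homomorphism from the symmetric mapping class group (rel boundary) of $\Sigma_{(k-1)/2,2}$ to $\Mod(D,\{p_i\}) \cong B_{k+1}$ is an isomorphism; equivalently, any discrepancy between the two sides upstairs would be a power of $\iota$ fixing the boundary pointwise, hence trivial. Your proposed $H_1$ check is therefore not needed (and as written it is the wrong test, since $\iota$ is detected by its action on the boundary, not by $-\id$ on homology). With these two corrections the argument is complete and coincides with the proof the paper is implicitly relying on.
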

	
	\para{The genus-$g$ star relation} We will also need to make use of a novel relation generalizing the star relation (setting $g=1$ below recovers the classical star relation). 
	\begin{proposition}[Genus-$g$ star relation]\label{proposition:star}
	With reference to the curves $a_1, a_2, c_1, \dots, c_{2g}, d_1, d_2, d_3$ on the surface $\Sigma_{g,3}$ of Figure \ref{figure:ramify}, the following relation holds in $\Mod(\Sigma_{g,3})$:
	\begin{equation}\label{equation:star}
	(T_{a_1} T_{a_2} T_{c_1} \dots T_{c_{2g}})^{2g+1} = T_{d_1}^g T_{d_2} T_{d_3}.
	\end{equation}
	\end{proposition}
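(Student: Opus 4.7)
The relation (\ref{equation:star}) generalizes the classical star relation (recovered at $g=1$), and my plan is to prove it by adapting the standard proof of the star relation from \cite{FM}. The strategy has two stages: first, show that $F \coloneq T_{a_1} T_{a_2} T_{c_1} \cdots T_{c_{2g}}$ has order dividing $2g+1$ in the quotient $\Mod(\Sigma_{g,3})/Z$, where $Z$ is the central subgroup generated by the boundary Dehn twists; second, identify the resulting central element $F^{2g+1}$ explicitly as $T_{d_1}^g T_{d_2} T_{d_3}$.

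For the first stage, the main task is to verify that $F^{2g+1}$ fixes each curve in the configuration up to isotopy. Using the intersection data suggested by the figure ($i(a_1,a_2)=0$; $i(a_j,c_1)=1$ and $i(a_j,c_k)=0$ for $k\ge 2$; $i(c_k,c_{k+1})=1$ and $i(c_k,c_j)=0$ for $|k-j|\ge 2$), together with iterated application of the braid relation (Proposition \ref{proposition:braid}) and the commutation of Dehn twists about disjoint curves, one computes $F(a_1), F(a_2), F(c_1), \ldots, F(c_{2g})$ explicitly and checks that iterating $2g+1$ times returns each curve to its original isotopy class. This is a finite combinatorial calculation generalizing the $g=1$ verification; it is the principal technical obstacle, since the bookkeeping of braid moves grows with $g$.

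For the second stage, once $F^{2g+1}$ fixes each curve in the configuration, the Alexander method implies $F^{2g+1}$ lies in the abelian subgroup generated by the Dehn twists about the curves $a_i$, $c_j$, and $d_k$. Computing the action of $F^{2g+1}$ on $H_1(\Sigma_{g,3};\Z)$---noting that each $T_{a_i}$ and $T_{c_j}$ acts as a nontrivial transvection while boundary twists act trivially---forces all interior-twist exponents to cancel, so $F^{2g+1} = T_{d_1}^{n_1} T_{d_2}^{n_2} T_{d_3}^{n_3}$ for some integers $n_k$. The exponents are then pinned down to $(n_1,n_2,n_3)=(g,1,1)$ by evaluating the action on three properly embedded arcs joining pairs of distinct $d_k$, using the standard arc-twist pairing formulas together with an Euler-characteristic count that singles out $d_1$ as the boundary carrying the genus.

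As an alternative route, if the direct combinatorial computation of the first stage proves too unwieldy, one could realize $\Sigma_{g,3}$ as a $(2g+1)$-fold cyclic branched cover of a pair-of-pants with suitable branch data, so that the $a_i$ and $c_j$ descend to simple arcs in the base and $F$ lifts a finite-order rotation. Via the Birman--Hilden correspondence, (\ref{equation:star}) would then reduce to a much simpler braid-group identity in the base, at the cost of introducing auxiliary covering-space machinery.
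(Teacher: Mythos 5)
Your main route has a genuine gap at its crux. The entire content of the relation is concentrated in your ``first stage,'' the claim that $F^{2g+1}$ fixes each curve of the configuration up to isotopy; you describe this as a ``finite combinatorial calculation'' and defer it, but it is not a routine check. The element $F$ does not visibly permute the curves $a_1,a_2,c_1,\dots,c_{2g}$: the images $F(c_i)$ are complicated curves, and tracking them symbolically through $2g+1$ iterations for arbitrary $g$ via braid and commutation relations is exactly the difficulty a proof must overcome, not bookkeeping to be deferred. (Your second stage also has a misstatement: the Alexander method applied to a filling collection of curves fixed by $F^{2g+1}$ yields directly that $F^{2g+1}$ is a product of boundary twists; the twists about the $a_i$ and $c_j$ do not generate an abelian subgroup, and the $H_1$ argument you invoke cannot by itself exclude interior twisting. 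The arc-pairing computation to pin down the exponents $(g,1,1)$ is fine in principle.)

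The paper sidesteps the periodicity problem entirely with a branched-cover argument close in spirit to your ``alternative route,'' but with different covering data: it realizes $\Sigma_{g,3}$ as a \emph{double} cover of an annulus branched at $2g+1$ points, not a $(2g+1)$-fold cover of a pair of pants. Under this cover $T_{a_1}T_{a_2}$ is the lift of a twist $T_\alpha$ in the base, each $T_{c_i}$ is the lift of a half-twist $\sigma_i$ exchanging adjacent branch points, and $T_{d_1}$ and $T_{d_2}T_{d_3}$ are lifts of the boundary twists $T_{\delta_1}^2$ and $T_{\delta_2}$. One then verifies the single identity $f = T_\alpha\sigma_1\cdots\sigma_{2g}T_{\delta_1}^{-1}$, where $f$ is the point-push cyclically permuting the branch points; the periodicity is then manifest, since $f^{2g+1}$ is the push around the core of the annulus, which equals $T_{\delta_1}^{-1}T_{\delta_2}$, and the relation follows by lifting. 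If you want to pursue a covering-space argument, I recommend replacing your proposed $(2g+1)$-fold cover (for which it is unclear that the $a_i$, $c_j$ descend or that $F$ lifts a rotation) with this hyperelliptic-type double cover.
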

	\begin{proof}
	\begin{figure}
	\labellist
	\small
	\pinlabel $d_1$ at 25 100
	\pinlabel $d_2$ at 212 106
	\pinlabel $d_3$ at 212 40
	\pinlabel $a_1$ at 95 100
	\pinlabel $a_2$ at 95 40
	\pinlabel $c_1$ at 60 89
	\pinlabel $c_2$ at 100 85
	\pinlabel $c_{2g-1}$ at 160 95
	\pinlabel $c_{2g}$ at 183 85
	\pinlabel $p_1$ at 327 125
	\pinlabel $p_{2g+1}$ at 278 100
	\pinlabel $p_2$ at 364 106
	\pinlabel $\alpha$ at 314 100
	\pinlabel $\sigma_2$ at 384 80
	\pinlabel $\delta_1$ at 327 72
	\pinlabel $\delta_2$ at 300 146
	\endlabellist
	\includegraphics{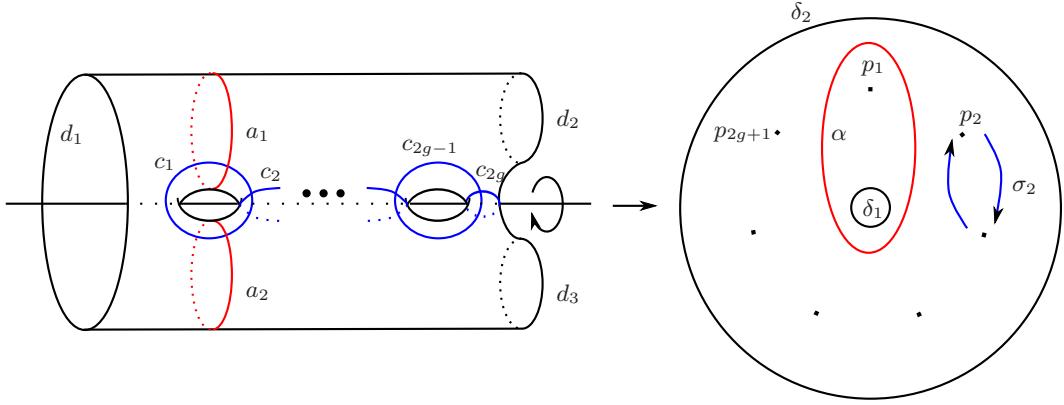}
	\caption{The genus-$g$ star relation.}
	\label{figure:ramify}
	\end{figure}
	
	We will derive the genus-$g$ star relation from a more transparent relation in a braid group. Figure \ref{figure:ramify} depicts a $2:1$ covering $\Sigma_{g,3} \to \Sigma_{0,2}$ ramified at $2g+1$ points. Number the ramification points clockwise $p_1, \dots, p_{2g+1}$, and consider the mapping class group $\operatorname{Mod}(\Sigma_{0,2,2g+1})$ relative to these points. Under the covering, the double-twist $T_{\delta_1}^2$ lifts to $T_{d_1} \in \Mod(\Sigma_{g,3})$, and the twist $T_{\delta_2}$ lifts to $T_{d_2} T_{d_3}$. The twist $T_\alpha$ lifts to $T_{a_1} T_{a_2}$, and the half-twist $\sigma_i$ lifts to $T_{c_i}$. 
	Let $f \in \operatorname{Mod}(\Sigma_{0,2,2g+1})$ be the push map moving each $p_i$ clockwise to $p_{i+1}$, with subscripts interpreted mod $2g+1$. One verifies the equality
	\[
	f = T_\alpha \sigma_1 \dots \sigma_{2g} T_{\delta_1}^{-1}.
	\]
	It follows that
	\[
	f^{2g+1} = (T_\alpha \sigma_1 \dots \sigma_{2g})^{2g+1} T_{\delta_1}^{-(2g+1)},
	\]
	since $T_{\delta_1}$ is central. As $f^{2g+1}$ is the push map around the core of the annulus, there is an equality
	\[
	f^{2g+1} = T_{\delta_1}^{-1} T_{\delta_2}.
	\]
	Combining these results,
	\begin{equation}\label{equation:flap}
	T_{\delta_2} T_{\delta_1}^{2g} = (T_\alpha \sigma_1 \dots \sigma_{2g})^{2g+1}.
	\end{equation}
	Under the lifting described above, the relation (\ref{equation:flap}) in $\operatorname{Mod}(\Sigma_{0,2,2g+1})$ lifts to the relation (\ref{equation:star}) in $\Mod(\Sigma_{g,3})$.
	\end{proof}
	
	\subsection{The Johnson generating set for $\mathcal{I}_g$}\label{subsection:johnson}
	There is a natural map
	\[
	\Psi: \Mod(\Sigma_g) \to \Sp_{2g}(\Z)
	\]
	taking a mapping class $f$ to the induced automorphism $f_*$ of $H_1(\Sigma_g; \Z)$. The {\em Torelli group} $\mathcal{I}_g$ is defined to be the kernel of this map:
	\[
	\mathcal{I}_g = \ker(\Psi).
	\]
	In \cite{johnsonfg}, Johnson produced a finite set of generators for $\mathcal{I}_g$, for all $g \ge 3$. Elements of this generating set are known as {\em chain maps}. Let $C = (c_1, \dots, c_k)$ be a chain of {\em odd} length with boundary $\partial C = d_1 \cup d_2$. There are exactly two ways to orient the collection of curves $c_1, \dots, c_k$ in such a way that the algebraic intersection number $c_i \cdot c_{i+1} = +1$. Relative to such a choice, the {\em chain map} associated to $C$ is then the mapping class $T_{d_1} T_{d_2}^{-1}$, where $d_1$ is distinguished as the boundary component to the {\em left} of the curves $c_1, c_3, \dots, c_k$. The mapping class $T_{d_1} T_{d-2}^{-1}$ is also called the {\em bounding pair map} for $d_1, d_2$. 
	
	While a complete description of Johnson's generating set is quite tidy and elegant, it has the disadvantage of requiring several preliminary notions before it can be stated. We therefore content ourselves with a distillation of his work that is more immediately applicable to our situation. 
	
	\begin{figure}[ht!]
	\labellist
	\small
	\pinlabel $c_1$ at 15 68
	\pinlabel $c_2$ at 45 80
	\pinlabel $c_3$ at 75 68
	\pinlabel $c_4$ at 105 80
	\pinlabel $c_5$ at 135 68
	\pinlabel $c_6$ at 160 80
	\pinlabel $c_{2g-2}$ at 240 80
	\pinlabel $c_{2g-1}$ at 270 68
	\pinlabel $c_{2g}$ at 300 80
	\pinlabel $\beta$ at 130 90
	\endlabellist
	\includegraphics{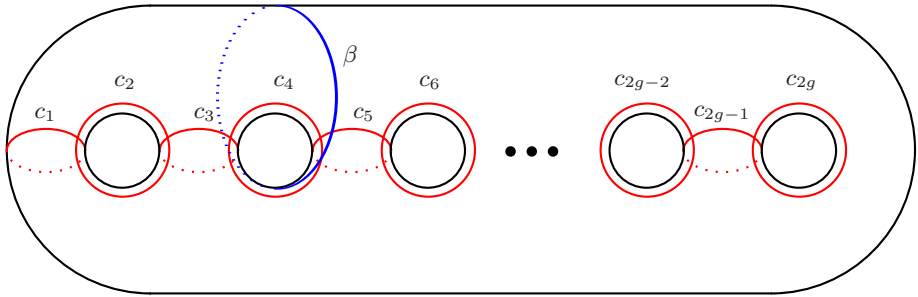}
	\caption{Curves involved in the Johnson generating set.}
	\label{figure:johnson}
	\end{figure}

	\begin{theorem}[Johnson]\label{theorem:genset}
	For $g \ge 3$, let $\Gamma \le \Mod(\Sigma_g)$ be a subgroup that contains the Dehn twists about the curves $c_1, \dots, c_{2g}$ shown in Figure \ref{figure:johnson}. Suppose that $\Gamma$ contains all chain maps for the odd-length chains of the form $(c_1, \dots, c_k)$ and $(\beta, c_5, \dots, c_k)$. Then $\mathcal{I}_g \le \Gamma$. 
	\end{theorem}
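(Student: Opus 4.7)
The plan is to reduce the statement to Johnson's original theorem from \cite{johnsonfg}, which for $g \ge 3$ provides a finite generating set $\mathcal J$ for $\mathcal I_g$ consisting of chain maps of odd-length chains in $\Sigma_g$. Granting this, the statement reduces to showing that every element of $\mathcal J$ lies in $\Gamma$.

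Let $H = \pair{T_{c_1}, \dots, T_{c_{2g}}}$, a subgroup of $\Gamma$ by hypothesis. Since $\Gamma$ is closed under conjugation by its own elements, and conjugation by $\psi \in H$ carries the chain map of an odd chain $C$ to the chain map of $\psi(C)$, it suffices to verify that every odd chain arising in $\mathcal J$ is $H$-equivalent to a chain of the form $(c_1, \dots, c_k)$ or $(\beta, c_5, \dots, c_k)$. Each $H$-conjugate of a hypothesized chain map then lies in $\Gamma$, and together these conjugates will exhaust $\mathcal J$.

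To carry out the $H$-orbit analysis, I first use the change-of-coordinates principle to classify odd chains up to $\Mod(\Sigma_g)$-conjugacy: the orbit of an odd chain of length $k$ is determined by the (unordered) partition of $g$ into $(k-1)/2$ and $g - (k-1)/2$, the genera of the two subsurfaces cut off by $\partial C$. I then refine to $H$-conjugacy. The subgroup $H$ is a \emph{proper} subgroup of $\Mod(\Sigma_g)$ --- adjoining $T_\beta$ recovers Humphries' generating set for all of $\Mod(\Sigma_g)$ --- and so each $\Mod(\Sigma_g)$-orbit of chains typically splits into several $H$-orbits. A direct configuration-theoretic analysis, carried out using the braid and commutation relations among the $T_{c_i}$ (Proposition \ref{proposition:braid}), shows that every $H$-orbit has a representative of one of the two types in the hypothesis, with the chains $(\beta, c_5, \dots, c_k)$ accounting precisely for those orbits that cannot be reached from $(c_1, \dots, c_{k'})$ without ``crossing'' the handle corresponding to $\beta$.

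The main obstacle is this final step: the enumeration of $H$-orbits on odd chains in $\Sigma_g$ and the verification that the two families $(c_1, \dots, c_k)$ and $(\beta, c_5, \dots, c_k)$ suffice as orbit representatives in every length. Once this combinatorial-topological check is complete, the theorem follows by combining Johnson's generating result with the conjugation argument above.
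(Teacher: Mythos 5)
Your skeleton --- quote Johnson's finite generation of $\mathcal{I}_g$ by chain maps, then conjugate his generators into the two hypothesized families by elements of $\Gamma$ --- is exactly what the paper intends (its proof is a one-line citation of the Main Theorem and Lemma~1(f) of \cite{johnsonfg}), and your conjugation formula $\psi [C] \psi^{-1} = [\psi(C)]$ is correct. The gap is in the step you yourself flag as the ``main obstacle,'' and it is not merely unfinished: as formulated it cannot be completed. You propose to enumerate the $H$-orbits of odd chains on $\Sigma_g$, where $H = \pair{T_{c_1}, \dots, T_{c_{2g}}}$, and to show that each orbit meets one of the two families. But the chain $c_1, \dots, c_{2g}$ of Figure \ref{figure:johnson} is invariant curve-by-curve under a hyperelliptic involution $\iota$, so $H$ centralizes $\iota$, and hence every curve appearing in a chain in the $H$-orbit of $(c_1, \dots, c_k)$ is itself $\iota$-invariant up to isotopy. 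For $g \ge 3$ there exist non-symmetric simple closed curves, so a single $\Mod(\Sigma_g)$-orbit of odd chains breaks into infinitely many $H$-orbits, almost none of which contain a representative from your two families. The universal statement you are aiming for is therefore false, and no configuration-theoretic enumeration will rescue it. (Relatedly, the $\Mod(\Sigma_g)$-orbit of an odd chain of length $k$ is unique --- all chains of a given length are equivalent --- so the ``partition of $g$'' plays no role at that stage.)

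The repair is to use the precise form of Johnson's Main Theorem rather than treating $\mathcal{J}$ as a black box: his generators are the chain maps of the odd \emph{consecutive subchains} of the two explicit chains $(c_1, \dots, c_{2g+1})$ and $(\beta, c_5, \dots, c_{2g+1})$. For this finite list the required conjugators are explicit and lie in $H$: a subchain $(c_i, \dots, c_j)$ is carried to $(c_1, \dots, c_{j-i+1})$ by a product of the twists $T_{c_m}$ (the braid-group ``shift,'' cf.\ the identification of $\pair{T_{c_1}, \dots, T_{c_{2g}}}$ with an image of $B_{2g+1}$ invoked in Section \ref{section:proofB}), while the subchains containing $\beta$ are necessarily initial segments $(\beta, c_5, \dots, c_j)$ and so already have the hypothesized form. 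Combining this shift with your conjugation identity is exactly what the paper means by deducing the theorem from the Main Theorem together with Lemma~1(f) of \cite{johnsonfg}; no orbit classification beyond these finitely many explicit chains is needed.
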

	\begin{proof}
	The interested reader should have no trouble deducing Theorem \ref{theorem:genset} from the Main Theorem and Lemma 1(f) of \cite{johnsonfg}.  
	\end{proof}
	
\section{The L\"onne presentation} \label{section:lonne}
	In this section, we recall L\"onne's work \cite{lonne} computing $\pi_1(\mathcal P_d)$, and apply this to derive some first properties of the monodromy map $\rho_d: \pi_1(\mathcal P_d) \to \Mod(\Sigma_g)$. 
	\subsection{Picard-Lefschetz theory}\label{subsection:PL} Picard-Lefschetz theory concerns the problem of computing monodromies attached to singular points of holomorphic functions $f: \C^n \to \C$. This then serves as the local theory underpinning more global monodromy computations. Our reference is \cite{arnold}. 
	
	Let $U \subset \C^2$ and $V \subset \C$ be open sets for which $0 \in V$. Let $f(u,v): U \to V$ be a holomorphic function. Suppose $f$ has an isolated critical value at $z = 0$, and that there is a single corresponding critical point $p \in \C^2$. Suppose that $p$ is {\em of Morse type} in the sense that the Hessian
	\[
\abcd{\frac{\partial^2 f}{\partial^2 x}}{\frac{\partial^2 f}{\partial x \partial y}}{\frac{\partial^2 f}{\partial y \partial x}}{\frac{\partial^2 f}{\partial^2 y}}
	\]
	is non-singular at $p$. 
	
	In such a situation, the fiber $f^{-1}(z)$ for $z \ne 0$ is diffeomorphic to an open annulus. The core curve of such an annulus is called a {\em vanishing cycle}. Let $\gamma$ be a small circle in $\C$ enclosing only the critical value at $z = 0$. Let $z_1 \in \gamma$ be a basepoint with corresponding core curve $c\subset f^{-1}(z_1)$. The Picard-Lefschetz theorem describes the monodromy obtained by traversing $\gamma$.

	\begin{theorem}[Picard-Lefschetz for $n = 2$]\label{theorem:PL}
	 With reference to the preceding discussion, the monodromy $\mu \in \Mod(f^{-1}(z_1))$ attached to traversing $\gamma$ counter-clockwise is given by a right-handed Dehn twist about the vanishing cycle:
	\[
	\mu = T_c^{-1}.
	\]
	\end{theorem}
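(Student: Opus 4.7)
The plan is to reduce to a standard local model via the holomorphic Morse lemma and then compute the monodromy on this model directly. Since $p$ is a Morse critical point and $f$ is holomorphic, the holomorphic Morse lemma supplies coordinates $(u,v)$ centered at $p$ in which $f(u,v) = u^2 + v^2$; the monodromy depends only on the germ of $f$ near $p$, so we may work throughout with this normal form. Passing to complex-linear coordinates $s = u + iv$, $t = u - iv$ gives $f = st$, so that the fiber $f^{-1}(z) = \{st = z\}$ for $z = \epsilon e^{i\theta}$ is parametrized by $s \in \C^*$ via $t = z/s$ and is topologically a cylinder. Under the identifications $u = (s+t)/2$ and $v = (s-t)/(2i)$, the real locus $\{(u,v) \in \R^2 : u^2+v^2 = \epsilon\}$ corresponds to $\{t = \bar s,\ |s|^2 = \epsilon\}$, i.e.\ the circle $|s| = \sqrt{\epsilon}$; this is the vanishing cycle $c$.

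Next, I would construct the monodromy via symplectic parallel transport: the standard K\"ahler form on $\C^2$ restricts to an area form on each smooth fiber, and its symplectic orthogonal defines a connection on the fiber bundle over $\gamma$ whose horizontal holonomy is $\mu$. The naive scaling $(s,t) \mapsto (e^{i\theta/2}s,\, e^{i\theta/2}t)$ intertwines the fibers but is \emph{not} horizontal: at $\theta = 2\pi$ it is the antipodal map $s \mapsto -s$, whose restriction to $c$ is a rotation by $\pi$ rather than the identity. Introducing a collar coordinate $s = \sqrt{\epsilon}\, e^{\rho + i\phi}$ near $c$ and solving explicitly for the horizontal lift shows that the true monodromy $\mu$ differs from the naive scaling by a compactly supported isotopy of each fiber that extends the half-rotation on $c$ to a full $2\pi$-rotation there and tapers to the identity as $|\rho| \to \infty$. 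This isotopy is precisely a right-handed Dehn twist about $c$; in the left-handed convention of \cite{FM}, this reads $\mu = T_c^{-1}$.

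The main obstacle is turning the last step into a clean calculation: one must keep track of the global horizontal lift carefully enough to identify the resulting compactly supported isotopy as an honest Dehn twist of the correct handedness, as opposed to some other twist-like homeomorphism that happens to have the same effect on $c$. The classical reference \cite{arnold} carries out precisely this computation (via either the K\"ahler connection above or the equivalent Milnor fibration picture for $f = u^2+v^2$), which is why I would elect to cite it directly rather than reproduce the details here.
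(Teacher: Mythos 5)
Your sketch is correct: the reduction to the local model $f = st$ via the holomorphic Morse lemma, the identification of the vanishing cycle with the circle $\{t = \bar s,\ |s|^2 = \epsilon\}$, and the correction of the naive scaling $(s,t)\mapsto(e^{i\theta/2}s, e^{i\theta/2}t)$ by a compactly supported twist is exactly the classical argument. The paper itself offers no proof of Theorem \ref{theorem:PL}, treating it as a standard result and deferring to \cite{arnold}, which is also where your proposal ultimately lands, so the two treatments agree.
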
	

More generally, let $D^*$ denote the punctured unit disk
\[
D^* = \{w \in \C \mid 0 < \abs w \le 1\},
\]
and write $D = \{w \in \C \mid \abs{w} \le 1\}$ for the closed unit disk.

Let $f(x,y,z)$ be a homogeneous polynomial of degree $d$ with the following properties: 
\begin{enumerate}
\item For $c \in D$, the plane curve $c z^d - f(x,y,z)$ is singular only for $c = 0$.
\item The only critical point for $f$ of the form $(x,y,0)$ is the point $(0,0,0)$.  
\item The function $f(x,y,1)$ has a single critical point of Morse type at $(x,y) = (0,0)$.
\end{enumerate}
In this setting, the local theory of Theorem \ref{theorem:PL} can be used to analyze the monodromy of the family 
\[
E \subset D^* \times \C P^2 = \{(c, [x:y:z]) \mid cz^d = f(x,y,z) \}
\]
around the boundary $\partial D^*$.

\begin{theorem}[Picard-Lefschetz for plane curve families]\label{theorem:PLglobal}
Let $f \in \C P^N$ satisfy the properties (1), (2), (3) listed above. Let $X = V(z^d - f(x,y,z))$ denote the fiber above $1 \in D^*$. Then there is a vanishing cycle $c \subset X$ so that the monodromy $\mu \in \Mod(X)$ obtained by traversing $\partial D^*$ counter-clockwise is given by a right-handed Dehn twist about the vanishing cycle:
\[
\mu = T_c^{-1}.
\]
\end{theorem}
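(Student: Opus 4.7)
The plan is to reduce to the local Picard--Lefschetz theorem (Theorem \ref{theorem:PL}) by locating the unique singularity of the central fiber, applying the local theorem in a small neighborhood of that point, and showing that the monodromy is trivial on the complement. Write $g_c(x,y,z) = cz^d - f(x,y,z)$. A point $[x:y:0] \in \C\P^2$ is a singular point of $V(g_c)$ if and only if $(x,y,0) \in \C^3$ is a critical point of $f$, so condition (2) rules out singularities along the line $\{z = 0\}$ for every $c \in D$. In the affine chart $z = 1$, the equation $g_c = 0$ becomes $c = F(x,y)$ with $F(x,y) := f(x,y,1)$, so $V(g_c) \cap \{z = 1\} = F^{-1}(c)$. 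Condition (3) says $F$ has a unique critical point of Morse type at $(0,0)$; combined with the fact that $V(f) = V(g_0)$ must be singular (by condition (1)) and Euler's relation, one sees that the critical value is $F(0,0) = 0$. Thus the only singularity of the central fiber is $p_0 = [0:0:1]$, and the fibers $V(g_c)$ for $c \in D^*$ are smooth.

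I would next localize around $p_0$. Choose a small closed ball $\overline{B} \subset \C^2$ centered at the origin and a small disk $D_\epsilon \subset D$ so that $F|_{\overline{B}} \colon \overline{B} \to D_\epsilon$ is a Milnor representative of the Morse singularity and satisfies the hypotheses of Theorem \ref{theorem:PL}. The local theorem then identifies the restricted family $\{F^{-1}(c) \cap B\}_{c \in D_\epsilon^*}$ with the standard Morse model: each fiber is an open annulus, the core of which is the vanishing cycle, and the monodromy around $\partial D_\epsilon$ is the right-handed Dehn twist about this cycle. Standard symplectic parallel transport arguments (see \cite{arnold}) allow this monodromy to be realized by a diffeomorphism supported in an annular neighborhood of the vanishing cycle and equal to the identity near $\partial B$.

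For the complement, consider the family $E' := \{(c,[x:y:z]) \in D \times \C\P^2 \mid g_c = 0,\ [x:y:z] \notin B\}$. By the first step, $E' \to D$ has no critical points, and since $\C\P^2 \setminus B$ is compact the projection is proper, so Ehresmann's fibration theorem yields a smooth trivialization, which can be arranged to agree with the local trivialization along $\partial B \cap V(g_c)$. Concatenating the local right-handed Dehn twist with the identity monodromy on the complement produces a global monodromy equal to $T_c^{-1} \in \Mod(X)$, as desired; an isotopy of loops in $D$ from $\partial D_\epsilon$ to $\partial D^*$ (both oriented counterclockwise) identifies the resulting mapping classes. The main technical point is the compatible matching of the local and global trivializations along $\partial B$; this is a routine horizontal lifting argument, but it is precisely what upgrades the local calculation of Theorem \ref{theorem:PL} to a genuine statement about the global family.
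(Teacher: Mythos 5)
Your proof is correct and follows essentially the same route as the paper: condition (2) is used to reduce the computation to the affine chart $z=1$, and condition (3) then allows the local Picard--Lefschetz theorem (Theorem \ref{theorem:PL}) to be applied to $f(x,y,1)$. You supply the standard localization-and-gluing details (Milnor ball around the unique critical point, Ehresmann trivialization on the complement, matching along $\partial B$) that the paper's two-line proof leaves implicit.
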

	
\begin{proof}
Condition (2) asserts that the monodromy can be computed by restricting attention to the affine subfamily obtained by setting $z = 1$. Define
\[
E^\circ \subset D^* \times \C^2 = \{(c,x,y) \mid c = f(x,y,1) \}.
\]
Define $U = \{(x,y) \in \C^2 \mid \abs{f(x,y,1)} \le 1\}$, and consider $f(x,y,1)$ as a holomorphic function $f: U \to D$. The monodromy of this family then corresponds to the monodromy of the original family $E \to D^*$. The result now follows from Condition (3) in combination with Theorem \ref{theorem:PL} as applied to $f(x,y,1)$. 
\end{proof}

	\subsection{L\"onne's theorem} There are some preliminary notions to establish before L\"onne's theorem can be stated. We begin by introducing the {\em L\"onne graphs} $\Gamma_d$. L\"onne obtains his presentation of $\pi_1(\mathcal{P}_d)$ as a quotient a certain group constructed from $\Gamma_d$. 
	\begin{definition}\label{definition:lonnegraph}[L\"onne graph]
	Let $d \ge 3$ be given. The {\em L\"onne graph} $\Gamma_d$ has vertex set 
	\[
	I_d = \{(a,b) \mid 1 \le a,b \le d-1 \}.
	\]
	Vertices $(a_1,b_1)$ and $(a_2, b_2)$ are connected by an edge if and only if both of the following conditions are met:
	\begin{enumerate}
	\item $\abs{a_1-a_2} \le 1$ and $\abs{b_1-b_2} \le 1$.
	\item $(a_1-a_2)(b_1 - b_2) \le 0$.
	\end{enumerate}
	The set of edges of $\Gamma_d$ is denoted $E_d$.
	\end{definition}
	\begin{figure}[ht!]
	\begin{center}
	\includegraphics{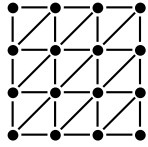}
	\caption{The L\"onne graph $\Gamma_5$.}
	\label{figure:gamma5}
	\end{center}
	\end{figure}
	
	Vertices $i,j,k \in \Gamma_d$ are said to form a {\em triangle} when $i,j,k$ are mutually adjacent. The triangles in the L\"onne graph are crucial to what follows. It will be necessary to endow them with orientations.
	\begin{definition}[Orientation of triangles]\label{definition:orient}
	Let $i,j,k$ determine a triangle in $\Gamma_d$.
	\begin{enumerate}
	\item If 
	\[
	i = (a,b),\quad j= (a,b+1), \quad k = (a+1,b),
	\]
	then the triangle $i,j,k$ is positively-oriented by traversing the boundary clockwise.
	\item If
	\[
	i = (a,b), \quad j = (a,b+1), \quad k = (a-1,b+1),
	\]
	then the triangle $i,j,k$ is positively-oriented by traversing the boundary counterclockwise. 
	\end{enumerate}
	We say that the ordered triple $(i,j,k)$ of vertices determining a triangle is positively-oriented if traversing the boundary from $i$ to $j$ to $k$ agrees with the orientation specified above.
	\end{definition}
	
	\begin{definition}[Artin group]
	Let $\Gamma$ be a graph with vertex set $V$ and edge set $E$. The {\em Artin group} $A(\Gamma)$ is defined to be the group with generators
	\[
	\sigma_i, i \in V,
	\]
	subject to the following relations:
	\begin{enumerate}
	\item $\sigma_i \sigma_j = \sigma_j \sigma_i$ for all $(i,j) \not \in E$.
	\item $\sigma_i \sigma_j \sigma_i = \sigma_j \sigma_i \sigma_j$ for all $(i,j) \in E$.
	\end{enumerate}
	\end{definition}
	
	\begin{theorem}[L\"onne]\label{theorem:lonne}
	For $d \ge 3$, the group $\pi_1(\mathcal{P}_d)$ is isomorphic to a quotient of the Artin group $A(\Gamma_d)$, 
	subject to the following additional relations:
	\begin{enumerate}
	\setcounter{enumi}{2}
	 \item \label{item:tringle}
	$\sigma_i \sigma_j \sigma_k \sigma_i = \sigma_j \sigma_k \sigma_i \sigma_j$ if $(i,j,k)$ forms a positively-oriented triangle in $\Gamma_d$.
	\item An additional family of relations $R_i, i \in I_d$.
	\item An additional relation $\widetilde R$.
	\end{enumerate}
	\end{theorem}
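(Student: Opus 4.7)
The plan is to compute $\pi_1(\mathcal{P}_d)$ by the Zariski--van Kampen / Lefschetz hyperplane method applied to a carefully chosen linear subspace $L \subset \mathbb{C}P^N$, combined with the Picard--Lefschetz interpretation (Theorem \ref{theorem:PLglobal}) of meridians as Dehn twists about vanishing cycles. The first step is to choose a reference polynomial $f_0$ of degree $d$---a natural candidate is a Brieskorn--Pham type polynomial or a perturbation of a union of $d$ concurrent lines---whose simultaneous Morsification within a generic perturbation $\{f_0 + tf_1 + \ldots\}$ produces exactly $(d-1)^2$ ordinary nodes, one for each element of the lattice $I_d$. By the Lefschetz hyperplane theorem for complements of hypersurfaces (together with irreducibility of $\mathcal{D}_d$), the inclusion $L \cap \mathcal{P}_d \hookrightarrow \mathcal{P}_d$ induces a surjection $\pi_1(L \cap \mathcal{P}_d) \twoheadrightarrow \pi_1(\mathcal{P}_d)$, and the former is generated by meridians $\sigma_i$ around each local branch of $\mathcal{D}_d$.

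With this setup, Theorem \ref{theorem:PLglobal} identifies each $\sigma_i$ with a Dehn twist about a vanishing cycle $c_i$ on the generic fiber. The adjacency relations of Definition \ref{definition:lonnegraph} are then read off from the geometry of the Morsification: $(i,j) \in E_d$ precisely when $c_i$ and $c_j$ meet transversely in a single point, while non-adjacent $c_i, c_j$ are disjoint. Proposition \ref{proposition:braid} immediately yields commutation relations when $(i,j) \notin E_d$ and braid relations when $(i,j) \in E_d$. The triangle relation (\ref{item:tringle}) is obtained by examining the codimension-two stratum of $\mathcal{D}_d$ where three mutually adjacent nodes simultaneously coalesce into a higher singularity, and extracting the local monodromy identity around a loop encircling such a stratum; the orientation convention of Definition \ref{definition:orient} is determined by the sign of the local intersection form between the three corresponding vanishing cycles.

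The remaining relations $R_i$ and $\widetilde R$ are global. I expect $R_i$ to encode the trivialization of a meridian after transport around the ``vertical'' slice of the pencil corresponding to the $i$-th lattice site, and $\widetilde R$ to record the projective identity of the form $\prod_i \sigma_i^{n_i} = 1$ coming from the fact that $\pi_1(\mathbb{C}P^N \setminus \mathcal{D}_d) \cong \mathbb{Z}/\deg(\mathcal{D}_d)\mathbb{Z}$ is cyclic. The main obstacle will be Step~1: constructing an explicit reference polynomial whose Morsification realizes the precise combinatorics of $\Gamma_d$, together with a controllable real locus so that the intersection pattern of vanishing cycles can be certified case by case. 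For small $d$ this is a finite check (cf.\ Figure \ref{figure:gamma5}), but in general it requires a delicate real-algebraic argument. Given such a Morsification, the rest reduces to standard Picard--Lefschetz in Milnor neighborhoods plus a Reidemeister--Schreier-type computation confirming that no further relations beyond $(1)$--$(5)$ are needed, which can be organized by matching the abelianization and second homology of the candidate presentation against the known invariants of $\pi_1(\mathcal{P}_d)$.
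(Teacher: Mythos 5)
This statement is L\"onne's main theorem, quoted from \cite{lonne}; the paper gives no proof of it and uses it as a black box (only the origin of the generators is revisited later, in the proof of Proposition \ref{proposition:genimg}, via the linearly perturbed Fermat polynomial and the Hefez--Lazzeri basis). So there is no internal argument to compare against. Measured against L\"onne's actual proof, your first half is the right architecture: a generic plane section of $\C P^N$ meeting $\mathcal D_d$, Zariski--van Kampen to get meridian generators, a perturbed Fermat polynomial whose $(d-1)^2$ Morse critical points are indexed by $I_d$, and Picard--Lefschetz to identify meridians with twists about vanishing cycles whose intersection pattern is governed by $\Gamma_d$. That much is a faithful sketch of where the generators and the relations (1)--(3) come from, though even here the verification that the vanishing cycles realize exactly the adjacency of $\Gamma_d$ (and the orientation convention behind the triangle relation) is substantial computational work, not something that can be ``read off.''

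The genuine gap is in your last step. The hard content of L\"onne's theorem is not exhibiting relations that hold but proving that the listed relations \emph{suffice}, i.e.\ that the map from the candidate presentation onto $\pi_1(\mathcal P_d)$ is injective. Your proposal to certify this ``by matching the abelianization and second homology of the candidate presentation against the known invariants of $\pi_1(\mathcal P_d)$'' cannot work: $H_1$ and $H_2$ do not detect missing relations in a non-abelian group, and $\pi_1(\mathcal P_d)$ is far from determined by these invariants. L\"onne's actual argument for completeness is a long induction on degree and dimension using the braid monodromy of the discriminant. Relatedly, your description of $\widetilde R$ rests on the claim that $\pi_1(\C P^N \setminus \mathcal D_d)$ is cyclic of order $\deg \mathcal D_d$; that is true only of its abelianization (by irreducibility of $\mathcal D_d$), not of the group itself, which is exactly the object being computed. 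The relation contributed by projectivization is the standard ``product of meridians around the hyperplane at infinity'' relation, and identifying it with $\widetilde R$ requires tracking that product through the Hefez--Lazzeri basis, which your sketch does not do.
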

	
	\begin{remark} Define the group $B(\Gamma_d)$ as the quotient of the Artin group $A(\Gamma_d)$ by the family of relations (\ref{item:tringle}) in Theorem \ref{theorem:lonne}. As our statement of L\"onne's theorem indicates, the additional relations will be of no use to us, and our theorem really concerns the lift of the monodromy representation $\tilde \rho_d: B(\Gamma_d) \to \Mod(\Sigma_g)$. 
	\end{remark}
	
	For the analysis to follow, it is essential to understand the mapping classes $\rho_d(\sigma_i), i \in I_d$. 
	
	\begin{proposition}\label{proposition:genimg}
	For each generator $\sigma_i$ of Theorem \ref{theorem:lonne}, the image
	\[
	\rho_d(\sigma_i) = T_{c_i}^{-1}
	\]
	is a right-handed Dehn twist about some vanishing cycle $c_i$ on a fiber $X \in \mathcal P_d$. 
	\end{proposition}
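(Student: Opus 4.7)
The plan is to deduce this directly from L\"onne's geometric construction of the generators $\sigma_i$ together with Theorem \ref{theorem:PLglobal}. In \cite{lonne}, L\"onne obtains his presentation by slicing $\mathcal{P}_d$ with an auxiliary family (essentially a pencil degenerating into a generic line arrangement) and analyzing the braid monodromy of the resulting curve of discriminant values. Each vertex $i = (a,b) \in I_d$ corresponds to exactly one nodal degeneration along this slice, namely the transverse intersection of a specific pair of lines in the degenerate arrangement. By L\"onne's construction, $\sigma_i$ is represented by a small loop in $\mathcal{P}_d$ based at a fixed smooth fiber and encircling exactly this smooth point of the discriminant $\mathcal{D}_d$.

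Given this identification, the first step would be to make the local description explicit: for each $i \in I_d$, choose projective coordinates so that the associated node is placed at the origin of the affine chart $z = 1$, and write the one-parameter family defining $\sigma_i$ in the form
\[
F_c(x,y,z) = f(x,y,z) - c \cdot g(x,y,z).
\]
One then verifies the three hypotheses of Theorem \ref{theorem:PLglobal} in turn: along the disk parameterizing $\sigma_i$ the fibers $V(F_c)$ are smooth for $c \ne 0$; no critical point of the limiting fiber lies on the line at infinity; and the dehomogenization $F_0(x,y,1)$ has a non-degenerate Hessian at the origin. All three conditions are consequences of the genericity already built into L\"onne's pencil, since a generic line arrangement has only ordinary nodes as singularities.

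With these hypotheses in place, Theorem \ref{theorem:PLglobal} applies verbatim to each $\sigma_i$ and produces a vanishing cycle $c_i \subset X$ together with the identification
\[
\rho_d(\sigma_i) = T_{c_i}^{-1},
\]
as required. The main obstacle is not mathematical depth but bookkeeping: one must faithfully extract L\"onne's concrete description of the generators from \cite{lonne} and verify that after the change of projective coordinates the local model around each node is of Morse type. Once this translation is carried out, Proposition \ref{proposition:genimg} is an immediate consequence of Picard-Lefschetz, and the vanishing cycles $c_i$ produced this way are precisely the curves whose geometric configuration will be analyzed in the subsequent sections.
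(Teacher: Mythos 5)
Your overall strategy --- represent each $\sigma_i$ by a small loop encircling a single Morse-type degeneration in an explicit one-parameter subfamily of $\mathcal P_d$, then apply Theorem \ref{theorem:PLglobal} --- is exactly the paper's argument. However, your description of L\"onne's auxiliary family is wrong, and not merely cosmetically so. L\"onne does not slice $\mathcal P_d$ by a pencil degenerating to a generic line arrangement; he uses the family $V(cz^d - f)$ attached to the linearly perturbed Fermat polynomial $f = x^d + y^d + \nu_x x z^{d-1} + \nu_y y z^{d-1}$ (the ``Hefez--Lazzeri'' setup), whose affine part $f(x,y,1)$ is a Morsification of the singularity $x^d + y^d$ with exactly $(d-1)^2$ nondegenerate critical points, in bijection with $I_d$. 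A generic arrangement of $d$ lines has only ${d \choose 2}$ nodes, which does not equal $\abs{I_d} = (d-1)^2$ (for $d=5$: $10$ versus $16$), so the bijection you posit between vertices of $\Gamma_d$ and transverse intersections of lines cannot exist, and the ``nodal degenerations along the slice'' you would feed into Theorem \ref{theorem:PLglobal} are not the right ones.

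Once the auxiliary family is corrected, the remainder of your outline coincides with the paper's proof: conditions (1)--(3) of Theorem \ref{theorem:PLglobal} hold for $cz^d - f$ near each critical value by the choice of $\nu_x, \nu_y$ (which also separates the critical values, embedding $I_d \subset \C$), L\"onne's theorem identifies the loops of the Hefez--Lazzeri basis with his generators $\sigma_i$, and Picard--Lefschetz then gives $\rho_d(\sigma_i) = T_{c_i}^{-1}$ for a vanishing cycle $c_i$ in the reference fiber.
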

	
	\begin{proof}
	The result will follow from Theorem \ref{theorem:PLglobal}, once certain aspects of L\"onne's proof are recalled. 
	
	The generators $\sigma_i$ of Theorem \ref{theorem:lonne} correspond to specific loops in $\mathcal P_d$ known as {\em geometric elements}. 
\begin{definition}[Geometric element]
Let $D = V(p)$ be a hypersurface in $\C^n$ defined by some polynomial $p$. An element $x \in \pi_1(\C^n \setminus D)$ that can be represented by a path isotopic to the boundary of a small disk transversal to $D$ is called a {\em geometric element}. If $\widetilde D$ is a projective hypersurface, an element $x \in \pi_1(\C P^n \setminus \widetilde D)$ is said to be a geometric element if it can be represented by a geometric element in some affine chart.
\end{definition}
	In L\"onne's terminology, the generators $\sigma_i, i \in I_d$ arise as a ``Hefez-Lazzeri basis'' - this will require some explanation. Consider the linearly-perturbed Fermat polynomial
	\[
	f(x,y,z) = x^d + y^d + \nu_x x z^{d-1}+ \nu_y y z^{d-1}
	\]
	for well-chosen constants $\nu_x, \nu_y$. Such an $f$ satisfies the conditions (1)-(3) of Theorem \ref{theorem:PLglobal} near each critical point. Moreover, there is a bijection between the critical points of $f(x,y,1)$ and the set $I_d$ of Definition \ref{definition:lonnegraph}. If $\nu_x, \nu_y$ are chosen carefully, each critical point lies above a distinct critical value - in this way we embed $I_d \subset \C$. 
	
	Each $c \in \C$ determines a plane curve $V(cz^d-f)$. The values of $c$ for which $V(c z^d-f)$ is not smooth are exactly the critical values of $f(x,y,1)$. The family
	\[
	H = \{V(c z^d-f) \mid V(c  z^d-f) \mbox{ is smooth}\}
	\]
	is a subfamily of $\mathcal{P}_{d}$ defined over $\C \setminus I_d$. The Hefez-Lazzeri basis $\{\sigma_i \mid i \in I_d\}$ is a carefully-chosen set of paths in $\C \setminus I_d$ with each $\sigma_i$ encircling an individual $i \in I_d$. L\"onne shows that the inclusions of these paths into $\mathcal{P}_d$ via the family $H$ generate $\pi_1(\mathcal P_d)$. The result now follows from an application of Theorem \ref{theorem:PLglobal}.
\end{proof}
	
	\subsection{First properties of $\rho_d$} Proposition \ref{proposition:genimg} establishes the existence of a collection $c_i, i \in I_d$ of vanishing cycles on $X$. In this section, we derive some basic topological properties of this configuration arising from the fact that the Dehn twists $T_{c_i}^{-1}$ must satisfy the relations (1)-(3) of L\"onne's presentation.
	
	\begin{lemma}\label{lemma:imgprops}\ 
	\begin{enumerate}
	\item\label{item:braid} If the vertices $v_i, v_j$ are adjacent, then the curves $c_i, c_j$ satisfy $i(c_i, c_j) = 1$. 
	\item\label{item:distinct} For $d \ge 4$, the curves $c_i, i \in I_d$ are pairwise distinct, and all $c_i$ are non-separating.
	\item\label{item:disjoint} If the vertices $v_i, v_j$ in $\Gamma_d$ are non-adjacent, then the curves $c_i$ and $c_j$ are disjoint.
	\item\label{item:triangle} For $d \ge 4$, if the vertices $v_i, v_j, v_k$ form a triangle in $\Gamma_d$, then the curves $c_i, c_j, c_k$ are supported on an essential subsurface\footnote{A subsurface $S' \subset S$ is {\em essential} if every component of $\partial S'$ is not null-homotopic.} $S_{ijk} \subset X$ homeomorphic to $\Sigma_{1,2}$. Moreover, if the triangle determined by $v_i,v_j,v_k$ is positively oriented, then $i(c_i, T^{-1}_{c_j}(c_k)) = 0$.
	\end{enumerate}
	\end{lemma}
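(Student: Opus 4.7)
I would prove the four parts in the order (2), (1), (3), (4). Part (2) requires external input (Picard--Lefschetz theory and Beauville's calculation), but once in place it enables rapid arguments for (1) and (3) via L\"onne's braid and commutation relations; part (4) is the main technical heart of the lemma.

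For (2), non-separation of each $c_i$ is a direct consequence of Picard--Lefschetz theory (Theorem~\ref{theorem:PLglobal}): a Morse-type vanishing cycle on a smooth curve degenerating to an irreducible nodal curve is always non-separating. Pairwise distinctness follows from Beauville's computation of $\Psi\circ\rho_d$ in \cite{beauville}: the homology classes $[c_i]\in H_1(X;\Z)$ can be identified with pairwise distinct vectors in a distinguished configuration in the symplectic lattice, and distinct homology classes force distinct isotopy classes.

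For (1), adjacency of $v_i, v_j$ in $\Gamma_d$ is by definition the L\"onne relation $\sigma_i\sigma_j\sigma_i = \sigma_j\sigma_i\sigma_j$; applying $\rho_d$ via Proposition~\ref{proposition:genimg} yields the braid relation on $T_{c_i}^{-1}$ and $T_{c_j}^{-1}$, and the converse direction of Proposition~\ref{proposition:braid} combined with $c_i\neq c_j$ from (2) gives $i(c_i,c_j)=1$. For (3), non-adjacency gives commutation $\sigma_i\sigma_j = \sigma_j\sigma_i$, so $T_{c_i}^{-1}$ and $T_{c_j}^{-1}$ commute; by the standard fact that commuting Dehn twists about distinct defining curves have disjoint representatives \cite[Section~3.5.1]{FM}, combined with (2), the curves $c_i$ and $c_j$ can be realized disjointly.

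For (4), part (1) shows the three curves pairwise intersect once. An Euler characteristic count on the graph $c_i\cup c_j\cup c_k$ shows that (up to the change-of-coordinates principle) there are exactly two possible topological types: \emph{(a)} all three curves meeting at a single common point, giving regular neighborhood $\cong \Sigma_{1,2}$; or \emph{(b)} three distinct pairwise intersection points, giving $\cong \Sigma_{1,3}$. The extra L\"onne relation $\sigma_i\sigma_j\sigma_k\sigma_i = \sigma_j\sigma_k\sigma_i\sigma_j$ for positively-oriented triangles rules out (b): after applying $\rho_d$ and using the braid relation together with $T_a T_b T_a^{-1} = T_{T_a(b)}$, the four-term relation reduces to an isotopy equation $T_{c_j}(c_i) = T_{c_k}T_{c_i}^{-1}(c_j)$ of simple closed curves, which fails in the $\Sigma_{1,3}$ model as a direct inspection shows. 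Once $S_{ijk}\cong\Sigma_{1,2}$ is established, the claim $i(c_i, T_{c_j}^{-1}(c_k)) = 0$ follows from the change-of-coordinates principle: model $c_i, c_j, c_k$ as the $(1,0), (0,1), (1,1)$ curves on a torus-with-two-holes meeting at a single point. A homology computation gives $T_{c_j}^{-1}(c_k)\sim\pm c_i$ in $H_1(\Sigma_{1,2};\Z)$; since primitive nonseparating homology classes on $\Sigma_{1,2}$ (accounting for boundary data) are realized by unique isotopy classes, $T_{c_j}^{-1}(c_k)$ is isotopic to $c_i$, giving intersection zero.

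The main obstacle is part (4), specifically the algebraic reduction of the four-term L\"onne relation to a concrete geometric identity and the verification that this identity distinguishes the two topological configurations. A secondary challenge lies in tracking orientation conventions---between Definition~\ref{definition:orient} and the signed Dehn-twist action on homology---to correctly match positively oriented triangles with the $(1,0),(0,1),(1,1)$-model on $\Sigma_{1,2}$.
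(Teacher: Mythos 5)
Your parts (1) and (3) match the paper's argument. The two real problems are in (2) and in the second half of (4).

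For (2), both external inputs you invoke are shakier than the combinatorial argument they replace. Pairwise distinctness of the classes $[c_i]\in H_1(X;\Z)$ is not something Beauville's computation hands you: there are $(d-1)^2$ vanishing cycles but $\rank H_1(X;\Z)=(d-1)(d-2)$, so the $[c_i]$ satisfy $d-1$ linear relations and their pairwise distinctness would itself require proof. (The non-separation claim via Picard--Lefschetz is fixable --- a degree-$d$ nodal curve with a single node is irreducible by B\'ezout --- but you don't supply this.) The paper instead argues combinatorially: for $d\ge 4$ no two vertices of $\Gamma_d$ have the same set of neighbors, so $c_i\ne c_j$ because $T_{c_i}$ braids with some $T_{c_k}$ that commutes with $T_{c_j}$; non-separation then follows from $i(c_i,c_k)=1$. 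In (4), your dichotomy also omits a case the paper must rule out separately: when the three curves share a common point, the regular neighborhood is $\Sigma_{1,2}$ but one of its boundary components may be inessential in $X$, so that the curves are supported on an essential $\Sigma_{1,1}$ and $c_k=T_{c_i}^{\pm1}(c_j)$. The lemma asserts an \emph{essential} $\Sigma_{1,2}$, and the paper excludes the $\Sigma_{1,1}$ case using a fourth vertex $v_l$ adjacent to exactly one of $v_i,v_j,v_k$; your proof as written does not reach the stated conclusion.

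The most serious gap is your derivation of $i(c_i,T_{c_j}^{-1}(c_k))=0$. First, the uniqueness claim you lean on is false: on $\Sigma_{1,2}$ the two end curves $x,z$ of a $3$-chain are disjoint, non-isotopic, and satisfy $[x]=\pm[z]$ in $H_1(\Sigma_{1,2};\Z)$ (their union separates), so a primitive nonseparating class does not determine an isotopy class even ``accounting for boundary data.'' Second, and more fundamentally, positing the $(1,0),(0,1),(1,1)$ model begs the question. Once $S_{ijk}\cong\Sigma_{1,2}$ is known, change of coordinates only gives $c_i=x$, $c_j=y$, $c_k=T_y^{\pm1}(z)$ for a $3$-chain $(x,y,z)$, and \emph{both} signs occur as genuine configurations of three curves pairwise intersecting once on $\Sigma_{1,2}$; they are distinguished precisely by whether $i(c_i,T_{c_j}^{-1}(c_k))$ is $0$ or $2$ (homologically, by whether $[c_k]=[c_i]+[c_j]$ or $[c_i]-[c_j]$ modulo boundary classes). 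No homology computation internal to $\Sigma_{1,2}$ can decide this; the only available input is L\"onne's relation (3) for the positively-oriented triangle, which is exactly how the paper pins down $c_k=T_y(z)$. What you defer as a ``secondary challenge'' of orientation conventions is in fact the entire content of the second assertion of (4).
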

	
	\begin{proof}
	(\ref{item:braid}): If $v_i$ and $v_j$ are adjacent, then the Dehn twists $T_{c_i}^{-1}$ and $T_{c_j}^{-1}$ satisfy the braid relation. It follows from Proposition \ref{proposition:braid} that $i(c_i,c_j) = 1$. \\
	
	\noindent (\ref{item:distinct}): Suppose $v_i$ and $v_j$ are distinct vertices. For $d \ge 4$, no two vertices have the same set of adjacent vertices, so that there is some $v_k$ adjacent to $v_i$ and not $v_j$. By (\ref{item:braid}) above, it follows that $T_{c_i}^{-1}$ and $T_{c_k}^{-1}$ satisfy the braid relation, while $T_{c_j}^{-1}$ and $T_{c_k}^{-1}$ do not, showing that the isotopy classes of $c_i$ and $c_j$ are distinct. Since each $c_i$ satisfies a braid relation with some other $c_j$, Proposition \ref{proposition:braid} shows that $c_i$ is non-separating.\\
	
	\noindent(\ref{item:disjoint}): If $v_i$ and $v_j$ are non-adjacent, then the Dehn twists $T_{c_i}^{-1}$ and $T_{c_j}^{-1}$ commute. According to \cite[Section 3.5.2]{FM}, this implies that either $c_i = c_j$ or else $c_i$ and $c_j$ are disjoint. By (\ref{item:distinct}), the former possibility cannot hold.\\
	
	\noindent(\ref{item:triangle}): Via the change-of-coordinates principle, it can be checked that if $c_i, c_j, c_k$ are curves that pairwise intersect once, then $c_i \cup c_j \cup c_k$ is supported on an essential subsurface of the form $\Sigma_{1,b}$ for $1 \le b \le 3$. In the case $b = 1$, the curve $c_k$ must be of the form $c_k = T_{c_i}^{\pm 1}(c_j)$. It follows that if $d$ is a curve such that $i(d,c_k) \ne 0$, then at least one of $i(d, c_i)$ and $i(d,c_j)$ must also be nonzero. However, as $d \ge 4$, there is always some vertex $v_l$ adjacent to exactly one of $c_i, c_j, c_k$. The corresponding curve $c_l$ would violate the condition required of $d$ above (possibly after permuting the indices $i,j,k$). 
	
	It remains to eliminate the possibility $b = 3$. In this case, the change-of-coordinates principle implies that up to homeomorphism, the curves $c_i, c_j, c_k$ must be arranged as in Figure \ref{figure:hyperbraid}. It can be checked directly (e.g. by examining the action on $H_1(\Sigma_{1,3})$) that for this configuration, the relation 
	\[
	T_{c_i}^{-1}T_{c_j}^{-1} T_{c_k}^{-1} T_{c_i}^{-1} = T_{c_j}^{-1}T_{c_k}^{-1} T_{c_i}^{-1} T_{c_j} ^{-1}
	\]
	{\em does not} hold. This violates relation (3) in L\"onne's presentation of $\pi_1(\mathcal P_d)$. We conclude that necessarily $b = 2$. 
	
	Having shown that $b = 2$, it remains to show the condition $i(c_i, T^{-1}_{c_j}(c_k)) = 0$ for a positively-oriented triangle. Let $(x,y,z)$ denote a $3$-chain on $\Sigma_{1,2}$. The change-of-coordinates principle implies that without loss of generality, $c_i=x,  c_j=y$, and $c_k = T_y^{\pm 1}(z)$. We wish to show that necessarily $c_k = T_y(z)$. It can be checked directly (e.g. by considering the action on $H_1(\Sigma_{1,2})$) that only in the case $c_k = T_y(z)$ does relation (3) in the L\"onne presentation hold. 	
	\begin{figure}[ht!]
	\begin{center}
	\includegraphics{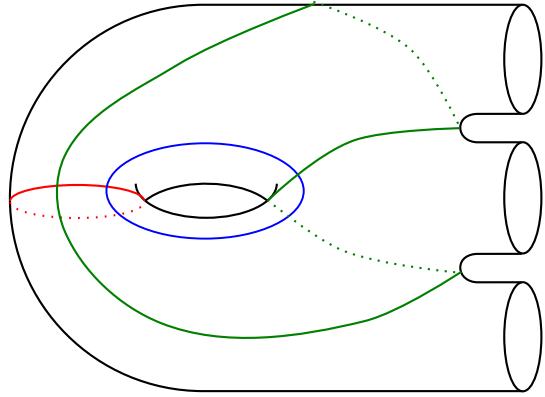}
	\caption{Lemma \ref{lemma:imgprops}.\ref{item:triangle}: the configuration of $c_i,c_j,c_k$ in the $b = 3$ case.}
	\label{figure:hyperbraid}
	\end{center}
	\end{figure}
	\end{proof}
	
\section{Configurations of vanishing cycles}\label{section:config}

The goal of this section is to derive an explicit picture of the configuration of vanishing cycles on a plane curve of degree $5$. The main result of the section is Lemma \ref{lemma:config}.

\begin{lemma}\label{lemma:config}
There is a homeomorphism $f: X \to \Sigma_6$ such that with reference to Figure \ref{figure:mainconfig},
\begin{enumerate}
\item\label{item:VC}The curves $c_1, \dots, c_{12}$ are vanishing cycles; that is, $T_{c_i} \in \im(\rho_5)$ for $1 \le i \le 12$. The curves $x,y,z$ are also vanishing cycles.
\item\label{item:square}The curve $b$ satisfies $T_b^2 \in \im(\rho_5)$.
\end{enumerate}
\end{lemma}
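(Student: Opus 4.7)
The plan is to use Proposition \ref{proposition:genimg} together with Lemma \ref{lemma:imgprops} to determine the isotopy classes of the $16$ vanishing cycles $c_{(a,b)}$, one per vertex of the L\"onne graph $\Gamma_5$, on $X \cong \Sigma_6$, and then to match these against Figure \ref{figure:mainconfig} under a suitable homeomorphism $f: X \to \Sigma_6$. The curves $c_1, \dots, c_{12}$ and $x,y,z$ should then be recognized either directly as members of this set of $16$, or as images of them under mapping classes in $\im(\rho_5)$, while $T_b^2 \in \im(\rho_5)$ should arise from one of the twist relations of Section \ref{section:mod}.

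For the identification step, I would first fix a long chain inside $\Gamma_5$ (for instance a ``staircase'' path through the $4 \times 4$ grid) and place its associated vanishing cycles into standard position on $\Sigma_6$ via Lemma \ref{lemma:chainCCP}. Once a maximal chain is fixed, each remaining vertex $v$ of $\Gamma_5$ belongs to at least one triangle with two chain vertices $u,w$. Lemma \ref{lemma:imgprops}.\ref{item:braid} and Lemma \ref{lemma:imgprops}.\ref{item:disjoint} then restrict $c_v$ to one of only finitely many isotopy classes, and the triangle condition $i(c_u, T^{-1}_{c_v}(c_w)) = 0$ (with orientation dictated by Definition \ref{definition:orient}) should force $c_v$ into a unique position, resolving the $\pm 1$ ambiguity present in the $b=2$ case of Lemma \ref{lemma:imgprops}.\ref{item:triangle}. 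Propagating outward around $\Gamma_5$ inductively should determine the entire configuration of $16$ vanishing cycles.

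Given this explicit configuration on $\Sigma_6$, I would identify $c_1, \dots, c_{12}$ with a subset of the $16$ vanishing cycles. The additional curves $x, y, z$ would then be obtained via braid moves such as $T_{c_i} T_{c_j}(c_i) = c_j$ (Proposition \ref{proposition:braid}), which produce new vanishing cycles because $T_{h(c)} = h T_c h^{-1} \in \im(\rho_5)$ whenever $h, T_c \in \im(\rho_5)$. To extract $T_b^2$, I would search for a sub-configuration of vanishing cycles fitting either an odd-length chain relation (Proposition \ref{proposition:chain}) or, more likely, the genus-$g$ star relation (Proposition \ref{proposition:star}) with $g = 2$, which yields $T_{d_1}^2 T_{d_2} T_{d_3}$. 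If we can exhibit a star configuration with $d_1 = b$ and with $d_2, d_3$ themselves either vanishing cycles or otherwise expressible via twists already known to lie in $\im(\rho_5)$, then $T_b^2 \in \im(\rho_5)$ follows after rearrangement.

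The main obstacle is the meticulous geometric bookkeeping: we must check that the inductive propagation across triangles yields exactly the configuration depicted in Figure \ref{figure:mainconfig} rather than a merely homeomorphic variant, and we must locate a specific star sub-configuration whose boundary data realizes $T_b^2$. Resolving the sign ambiguity in the triangle condition consistently across all of $\Gamma_5$ --- so that the resulting $f$ is globally well-defined --- is the step most likely to require a careful case analysis, and identifying the precise star (or chain) relation producing $T_b^2$ is likely to require examining several candidate sub-configurations before one realizes $b$ as an appropriate boundary curve.
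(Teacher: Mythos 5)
Your proposal follows essentially the same route as the paper: establish that the intersection pattern and triangle relations of $\Gamma_5$ determine the configuration of the $16$ vanishing cycles up to homeomorphism (the paper's Lemma \ref{lemma:unique}, proved via change of coordinates for chains plus the $i(c_i, T_{c_j}^{-1}(c_k))=0$ condition), then generate $c_1,\dots,c_{12}$ and $x,y,z$ by twisting vanishing cycles along vanishing cycles, and finally obtain $T_b^2$ from the genus-$2$ star relation applied to a subsurface bounded by $b$ and two vanishing cycles (the paper uses $b,y,z$). The only substantive detail you elide is that the inductive propagation argument requires verifying that the complement of the accumulated curves is a union of disks (the paper does this by an Euler characteristic count), without which the triangle condition alone does not pin down isotopy classes of the remaining arcs.
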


\begin{figure}
\labellist
\small
\pinlabel $c_1$ at 15 118
\pinlabel $c_2$ at 45 135
\pinlabel $c_3$ at 70 135
\pinlabel $c_4$ at 88 150
\pinlabel $c_5$ at 145 170
\pinlabel $c_6$ at 168 185
\pinlabel $c_7$ at 210 137
\pinlabel $c_8$ at 225 105
\pinlabel $c_9$ at 210 73
\pinlabel $c_{10}$ at 227 40
\pinlabel $c_{11}$ at 152 58
\pinlabel $c_{12}$ at 83 70
\pinlabel $c_{13}$ at 80 90

\pinlabel $x$ at 212 200
\pinlabel $y$ at 205 10
\pinlabel $z$ at 145 105
\pinlabel $b$ at 118 175
\endlabellist
\includegraphics{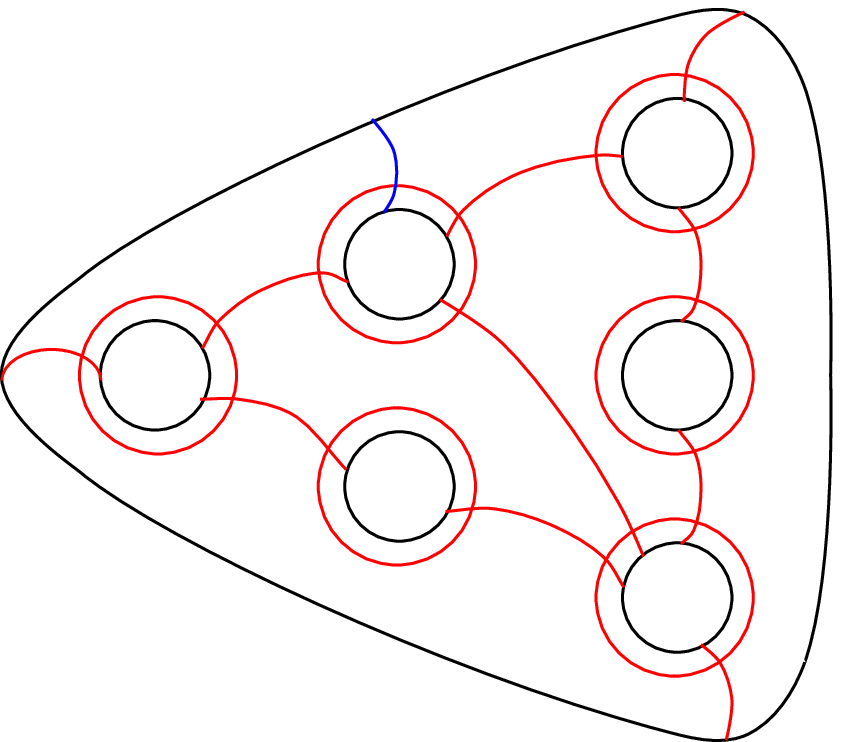}
\caption{The curves of Lemma \ref{lemma:config}. The bottom halves of curves $b,x,y,z,$ and $c_i$ for $i$ odd have been omitted for clarity; on the bottom half, each curve follows its mirror image on the top.}
\label{figure:mainconfig}
\end{figure}

\begin{proof}
Lemma \ref{lemma:config} will be proved in three steps.

\para{Step 1: Uniqueness of L\"onne configurations}
\begin{lemma}\label{lemma:unique} Suppose $d \ge 5$ is odd. Up to homeomorphism, there is a unique configuration of curves $c_i, i \in I_d$ on $\Sigma_g$ whose intersection pattern is prescribed by $\Gamma_d$ and such that the twists $T_{c_i}^{-1}$ satisfy the relations (1),(2),(3) given by L\"onne's presentation. 
\end{lemma}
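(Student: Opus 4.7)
The plan is to construct, one curve at a time, a homeomorphism matching any two realizations of the L\"onne configuration, using the change-of-coordinates principle together with the triangle constraint from Lemma~\ref{lemma:imgprops}.\ref{item:triangle}.

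First I would order the vertices of $\Gamma_d$ as $v_1, \ldots, v_{(d-1)^2}$ in a reading order on the grid, arranged so that for each $v_k$ with $k \ge 3$ there exist indices $i, j < k$ with $(v_i, v_j, v_k)$ a positively-oriented triangle in $\Gamma_d$. The triangulated-grid structure of $\Gamma_d$ makes such an ordering easy to produce. The first few curves, corresponding to the initial segment of the ordering, form a short chain in $\Gamma_d$, so Lemma~\ref{lemma:chainCCP} (together with its straightforward odd-length analogue) lets us normalize them: any two realizations of this initial chain are related by a homeomorphism of $\Sigma_g$.

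For the inductive step, suppose a homeomorphism has already been produced matching the first $k$ curves of the two configurations. To place $c_{v_{k+1}}$, I would invoke Lemma~\ref{lemma:imgprops}.\ref{item:triangle} applied to the triangle $(v_i, v_j, v_{k+1})$: the three curves lie on an essential $\Sigma_{1,2}$ subsurface, and after applying $T_{c_{v_j}}^{-1}$ to $c_{v_{k+1}}$ one obtains a $3$-chain whose handedness is dictated by the sign of the triangle. The change-of-coordinates principle for $3$-chains on $\Sigma_{1,2}$, combined with the intersection data requiring $c_{v_{k+1}}$ to be disjoint from all previously placed non-adjacent curves, pins down $c_{v_{k+1}}$ up to a homeomorphism supported on a neighborhood of the configuration; this homeomorphism can then be extended to all of $\Sigma_g$ fixing the previously placed curves.

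The main obstacle is global consistency. When $v_{k+1}$ belongs to several triangles with already-placed vertices, each such triangle imposes its own constraint on $c_{v_{k+1}}$, and one must verify these agree. This consistency is a combinatorial feature of $\Gamma_d$: the triangles around a given vertex organize into a ``fan,'' and the orientation conventions of Definition~\ref{definition:orient} are arranged precisely so that the $3$-chain conditions from neighboring triangles are mutually compatible. A secondary subtlety is checking the disjointness requirements with vertices not sharing a common triangle with $v_{k+1}$; these follow automatically once the local configuration around each triangle has the correct combinatorial type, since the complement of the already-placed curves is a controlled subsurface into which $c_{v_{k+1}}$ is forced.
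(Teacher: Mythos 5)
Your strategy — normalize an initial chain, then add one curve at a time, using the positively-oriented-triangle condition of Lemma~\ref{lemma:imgprops}.\ref{item:triangle} to resolve the left/right handedness ambiguity at each step — correctly identifies one of the two key ingredients (the paper uses the triangle condition in exactly this way to choose between an arc and its mirror image). But the inductive step has a genuine gap at its central claim: that the intersection data and the triangle conditions pin down $c_{v_{k+1}}$ ``up to a homeomorphism supported on a neighborhood of the configuration,'' extendable to a homeomorphism of $\Sigma_g$ fixing the previously placed curves. At an intermediate stage of the induction the complement of the placed curves is a subsurface of large positive genus, and the new curve meets it in a collection of arcs. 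Arcs in a positive-genus complement are not determined by their endpoints or by their intersections with the placed curves: an arc can be separating or non-separating in the complement, can cut off pieces of various genera, and so on, all without changing any of the data your induction tracks. The triangle relations only constrain triples of curves that form a triangle in $\Gamma_d$, so they say nothing about how $c_{v_{k+1}}$ wanders through the part of the surface not yet visited. The sentence ``the complement of the already-placed curves is a controlled subsurface into which $c_{v_{k+1}}$ is forced'' is precisely the assertion that needs proof, and it is false as a purely local statement.

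This is why the paper's proof is organized globally rather than curve-by-curve: it first fills subsurfaces $A_i$ with the odd rows (even-length chains, where change of coordinates is unambiguous), then shows each remaining curve meets each $A_i$ in a \emph{single} arc whose position is forced, and finally — crucially — computes the Euler characteristic of the complement of the entire configuration and shows it is a union of disks. Only that last global step justifies the claim that the arcs, and hence the curves, are determined by their combinatorial data; it is the analogue, for the full configuration, of the ``controlled subsurface'' you assert at each stage. To repair your induction you would need either to prove an intermediate-stage version of this disk decomposition (which essentially reproduces the paper's argument) or to use the fact that both candidate curves extend to complete L\"onne configurations — but that again forces you to reason about the whole configuration at once rather than locally. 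A smaller issue: controlling $c_{v_{k+1}}$ on the $\Sigma_{1,2}$ supporting one triangle does not control its position relative to placed curves adjacent to $v_{k+1}$ but lying outside that triangle, so even the local step needs more care than the $3$-chain change of coordinates provides.
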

A configuration of curves $c_i, i \in I_d$ as in Lemma \ref{lemma:unique} will be referred to as a {\em L\"onne configuration}. 
\begin{proof}

Let $a_{1,1},\dots, a_{d-1, d-1}$ determine a L\"onne configuration on $\Sigma_g$. We will exhibit a homeomorphism of $\Sigma_g$ taking each $a_{i,j}$ to a corresponding $b_{i,j}$ in a ``reference'' configuration $\{b_{i,j}\}$ to be constructed in the course of the proof. This will require three steps.

\para{Step 1: A collection of disjoint chains} Each row in the L\"onne graph determines a chain of length $d-1$. {The change of coordinates principle for chains of even length} (Lemma \ref{lemma:chainCCP}) asserts that any two chains of length $d-1$ are equivalent up to homeomorphism. Considering the odd-numbered rows of $\Gamma_d$, it follows that there is a homeomorphism $f_1$ of $\Sigma_g$ that takes each $a_{2i-1,j}$ for $1 \le i \le d-1$ to a curve $b_{2i-1,j}$ in a standard picture of a chain. We denote the subsurface of $\Sigma_g$ determined by the chain $a_{2i-1,1}, \dots, a_{2i-1,d-1}$ as $A_i$, and similarly we define the subsurfaces $B_i$ of the reference configuration. Each $A_i, B_i$ is homeomorphic to $\Sigma_{(d-1)/2,1}$. 

\para{Step 2: Arcs on $A_i$} The next step is to show that up to homeomorphism, there is a unique picture of what the intersection of the remaining curves $a_{2i,j}$ with $\bigcup A_i$ looks like. Consider a curve $a_{2i,j}$. Up to isotopy, $a_{2i,j}$ intersects only the subsurfaces $A_i$ and $A_{i+1}$. We claim that $a_{2i,j}$ can be isotoped so that its intersection with $A_i$ is a single arc, and similarly for $A_{i+1}$. If $j = d-1$, then $a_{2i, d-1}$ intersects only the curve $a_{2i-1, d-1}$, and $i(a_{2i,d-1},a_{2i-1,d-1}) = 1$. It follows that if $a_{2i,d-1} \cap A_i$ has multiple components, exactly one is essential, and the remaining components can be isotoped off of $A_i$.

 In the general case where $a_{2i,j}$ intersects both $a_{2i-1,j}$ and $a_{2i-1,j+1}$, an analogous argument shows that $a_{2i,j} \cap A_i$ consists of one or two essential arcs. Consider the triangle in the L\"onne graph determined by $a_{2i,j}, a_{2i-1,j}, a_{2i-1, j+1}$. According to Lemma \ref{lemma:imgprops}.\ref{item:triangle}, the union $a_{2i,j}\cup a_{2i-1,j}\cup a_{2i-1, j+1}$ is supported on an essential subsurface of the form $\Sigma_{1,2}$. Figure \ref{figure:twoarc} shows that if $a_{2i,j} \cap A_i$ consists of two essential arcs, then $a_{2i,j}\cup a_{2i-1,j}\cup a_{2i-1, j+1}$ is supported on an essential subsurface of the form $\Sigma_{1,3}$, in contradiction with Lemma \ref{lemma:imgprops}.\ref{item:triangle}. Similar arguments establish that $a_{2i-2,j} \cap A_{i}$ is a single essential arc as well. 

\begin{figure}[ht!]
\begin{center}
\includegraphics{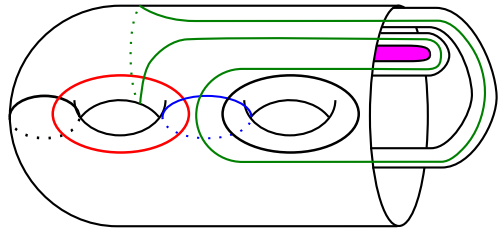}
\caption{If $a_{2i,j}$ cannot be isotoped onto a single arc inside $A_i$, then the curve enclosed by the inner strip (shaded) is essential in $\Sigma_g$, causing $a_{2i,j}\cup a_{2i-1,j}\cup a_{2i-1, j+1}$ to be supported on a surface $\Sigma_{1,3}$.}
\label{figure:twoarc}
\end{center}
\end{figure}

We next show that all points of intersection $a_{2i,j} \cap a_{2i, j +1}$ can be isotoped to occur on both $A_i$ and $A_{i+1}$. This also follows from Lemma \ref{lemma:imgprops}.\ref{item:triangle}. If some point of intersection $a_{2i,j} \cap a_{2i, j +1}$ could not be isotoped onto $A_i$, then the union  $a_{2i, j} \cup a_{2i, j+1} \cup a_{2i-1, j+1}$ could not be supported on a subsurface homeomorphic to $\Sigma_{1,2}$. An analogous argument applies with $A_{i+1}$ in place of $A_i$. This is explained in Figure \ref{figure:arcint}.

\begin{figure}[ht!]
\begin{center}
\includegraphics{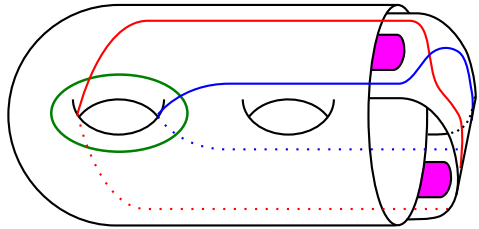}
\caption{If the intersection $a_{2i,j} \cap a_{2i,j+1}$ cannot be isotoped to occur on $A_i$, then both curves indicated by the shaded regions are essential in $\Sigma_g$, causing $a_{2i,j}\cup a_{2i,j+1}\cup a_{2i-1, j+1}$ to be supported on a surface $\Sigma_{1,3}$.}
\label{figure:arcint}
\end{center}
\end{figure}
It follows from this analysis that all crossings between curves in row $2i$ can be isotoped to occur in a collar neighborhood of $\partial A_i$. We define $A_i^+$ to be a slight enlargement of $A_i$ along such a neighborhood, so that all crossings between curves in row $2i$ occur in $A_i^+ \setminus A_i$. 

We can now understand what the collection of arcs $a_{2i,1} \cap A_i^+, \dots, a_{2i, d-1}\cap A_i^+$ looks like. To begin with, the change-of-coordinates principle asserts that up to a homeomorphism of $A_i$ fixing the curves $\{a_{2i-1,j}\}$, the arc $a_{2i,1} \cap A_i$ can be drawn in one of two ways. The first possibility is shown in Figure \ref{figure:arcsonAi}(a), and the second is its mirror-image obtained by reflection through the plane of the page (i.e. the curve with the dotted and solid portions exchanged). In fact, $a_{2i,1} \cap A_i$ must look as shown. This follows from Lemma \ref{lemma:imgprops}.\ref{item:triangle}. The vertices $(a_{2i-1, 1}, a_{2i-1, 2}, a_{2i,1})$ form a positively-oriented triangle, and so $i(a_{2i-1,1}, T_{a_{2i-1,2}}^{-1}(a_{2i,1})) = 0$. This condition precludes the other possibility. 

The pictures for $a_{2i,2}, \dots, a_{2i, d-1}$ are obtained by proceeding inductively. In each case, there are exactly two ways to draw an arc satisfying the requisite intersection properties, and Lemma \ref{lemma:imgprops}.\ref{item:triangle} precludes one of these possibilities. The result is shown in Figure \ref{figure:arcsonAi}(b). 

It remains to understand how the crossings between curves in row $2i$ are organized on $A_i^+$. As shown, the arcs $a_{2i,j}\cap A_i$ and $a_{2i,j+1}\cap A_i$ intersect $\partial A_i$ twice each, and in both instances the intersections are adjacent relative to the other arcs. There are thus apparently two possibilities for where the crossing can occur. However, one can see from Figure \ref{figure:arcsonAi}(c) that once a choice is made for one crossing, this enforces choices for the remaining crossings. Moreover, the two apparently distinct configurations are in fact equivalent: the cyclic ordering of the arcs along $\partial A_i^+$ is the same in either case, and the combinatorial type of the cut-up surface 
\[
A_i^\circ: = A_i^+ \setminus \bigcup \{a_{k, j} \mid 2i-1 \le k \le 2i, 1 \le j \le d-1 \}
\]
is the same in either situation. The change-of-coordinates principle then asserts the existence of a homeomorphism of $A_i^+$ sending each $a_{2i-1,j}$ to itself, and taking one configuration of arcs to the other. 

Having seen that the arcs $a_{2i,j} \cap A_i^+$ can be put into standard form, it remains to examine the other collection of arcs on $A_i^+$, namely those of the form $a_{2i-2,j}$. It is easy to see by induction on $d$ that the cut-up surface $A_i^\circ$ is a union of polyhedral disks for which the edges correspond to portions of the curves $a_{2i-1,j}$, the arcs $a_{2i,j} \cap A_i^+$, or else the boundary $\partial A_i^+$. It follows that the isotopy class of an arc $a_{2i-2,j}\cap A_I^+$ is uniquely determined by its intersection data with the curves $a_{2i-1,j}$ and $a_{2i,j}$.

 For $j \ge 2$, the curve $a_{2i-2,j}$ intersects $a_{2i-1,j-1}$ and $a_{2i-1,j}$, and is disjoint from all curves $a_{2i, k}$. As $a_{2i,j-1}$ has the same set of intersections as $a_{2i-2,j}$, it follows that $a_{2i-2,j} \cap A_i^+$ must run parallel to $a_{2i,j-1}$. The curve $a_{2i-2,1}$ intersects only $a_{2i,1}$; consequently $a_{2i-2,1}\cap A_i^+$ is uniquely determined. As can be seen from Figure \ref{figure:arcsonAi}(c), this forces each subsequent $a_{2i-2,j}$ onto a particular side of $a_{2i,j-1}$. 
 \begin{figure}[ht!]
 \begin{center}
 \labellist
 \small
 \pinlabel (a) at 161.6 104
\pinlabel (b) at 404 104
\pinlabel (c) at 295 4
 \endlabellist
 \includegraphics{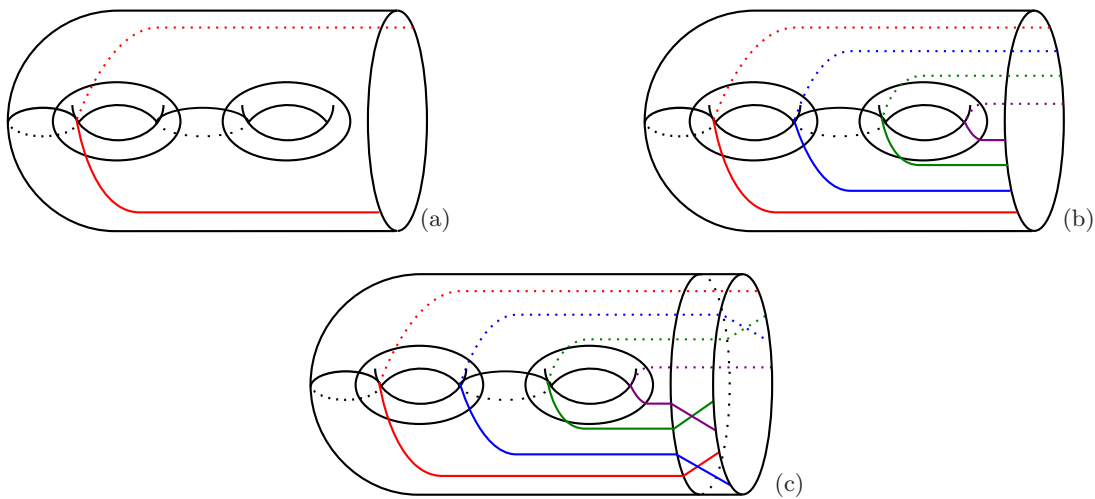}
 \caption{The surface $A_i^+$. (a): The correct choice for $a_{2i,1} \cap A_i$. (b): The configuration $a_{2i,j} \cap A_i$. (c) The configuration $a_{2i,j} \cap A_i^+$. }
 \label{figure:arcsonAi}
 \end{center}
 \end{figure}

\para{Step 3: Arcs on the remainder of $\Sigma_g$} Consider now the subsurface 
\[
\Sigma_g^\circ := \Sigma_g \setminus \bigcup A_i.
\] 
This has $(d-1)/2$ boundary components $\partial_{k}$, indexed by the corresponding $A_k$. The intersection $a_{2i,j} \cap \left( \Sigma_g \setminus \bigcup A_i\right)$ consists of two arcs, each connecting $\partial_{i}$ with $\partial_{i+1}$. The strategy for the remainder of the proof is to argue that when all these arcs are deleted from $\Sigma_g^\circ$, the result is a union of disks. The change-of-coordinates principle will then assert the uniqueness of such a configuration of arcs, completing the proof.

For what follows, it will be convenient to refer to a product neighborhood $[0,1] \times [0,1] \subset \Sigma_g^\circ$ of some arc $a_{2i,j} \cap \Sigma_g^\circ$ as a {\em strip}. Our first objective is to compute the Euler characteristic $\chi$ of the surface $\Sigma_g^{\circ \circ}$ obtained by deleting strips for all arcs from $\Sigma_g^\circ$. Then an analysis of the pattern by which strips are attached will determine the number of components of this surface.

To begin, we return to the setting of Figure \ref{figure:arcint}. Above, it was argued that for $2i < (d-1)/2$, the intersection $a_{2i,j} \cap a_{2i,j+1}$ can be isotoped onto either $A_{i}$ or $A_{i+1}$. This means that there is a strip that contains both $a_{2i,j} \cap \Sigma_g^\circ$ and $a_{2i,j+1} \cap \Sigma_g^\circ$. Grouping such strips together, it can be seen that for $1\le i \le (d-3)/2$, the $2i^{th}$ row of the L\"onne graph gives rise to $d$ strips. In the last row, there are $d-1$ strips. So in total there are $1/2(d+1)(d-2)$ strips, and each strip contributes $-1$ to the Euler characteristic. 

Recall the relation $g = (d-1)(d-2)/2$: this means that 
\[
\chi(\Sigma_g) = 2 - (d-1)(d-2).
\]
Each $A_i$ has Euler characteristic $\chi(A_i) = 2-d$. It follows that
\begin{eqnarray*}
\chi(\Sigma_g^\circ) = \chi(\Sigma_g) - \sum_{i = 1}^{(d-1)/2} \chi(A_i) =& 2 - (d-1)(d-2) + (d-1)(d-2)/2\\  
=& 2 - (d-1)(d-2)/2.
\end{eqnarray*}
Therefore,
\begin{eqnarray*}
\chi(\Sigma_g^{\circ \circ}) =& \chi(\Sigma_g^\circ) + 1/2(d+1)(d-2) \\ =& d.
\end{eqnarray*}

We claim that $\Sigma_g^{\circ \circ}$ has $d$ boundary components. This will finish the proof, as a surface of Euler characteristic $d$ and $b = d$ boundary components must be a union of $d$ disks. The claim can easily be checked directly in the case $d = 5$ of immediate relevance. For general $d$, this follows from a straightforward, if notationally tedious, verification, proceeding by an analysis of the cyclic ordering of the arcs $a_{i,j}$ around the boundary components $\partial A_k^+$. 
\end{proof}

\para{Step 2: A convenient configuration} 

\begin{figure}[ht!]
\begin{center}
\labellist
\small
\pinlabel $a_1$ at 28.8 262
\pinlabel $a_2$ at 86.4 228
\pinlabel $a_3$ at 76.8 157.6
\pinlabel $a_5$ at 158.4 261.6
\pinlabel $a_7$ at 172 121.2
\pinlabel $a_9$ at 167.2 342
\pinlabel $a_{10}$ at 240 232
\pinlabel $a_{11}$ at 231.2 114
\pinlabel $a_{13}$ at 306 288
\pinlabel $a_{15}$ at 308 184.8
\pinlabel $\iota$ at 8 248.8
\pinlabel $a_{15}$ at 303.2 112
\pinlabel $a_5$ at 335.2 112
\pinlabel $a_3$ at 369 112
\pinlabel $a_1$ at 403 112
\pinlabel $a_{13}$ at 303.2 78.4
\pinlabel $a_7$ at 335.2 78.4
\pinlabel $a_2$ at 369 78.4
\pinlabel $a_4$ at 403 78.4
\pinlabel $a_{11}$ at 303.2 44.8
\pinlabel $a_9$ at 335.2 44.8
\pinlabel $a_8$ at 369 44.8
\pinlabel $a_6$ at 403 44.8
\pinlabel $a_{10}$ at 303.2 11.2
\pinlabel $a_{12}$ at 335.2 11.2
\pinlabel $a_{14}$ at 369 11.2
\pinlabel $a_{16}$ at 403 11.2
\endlabellist
\includegraphics{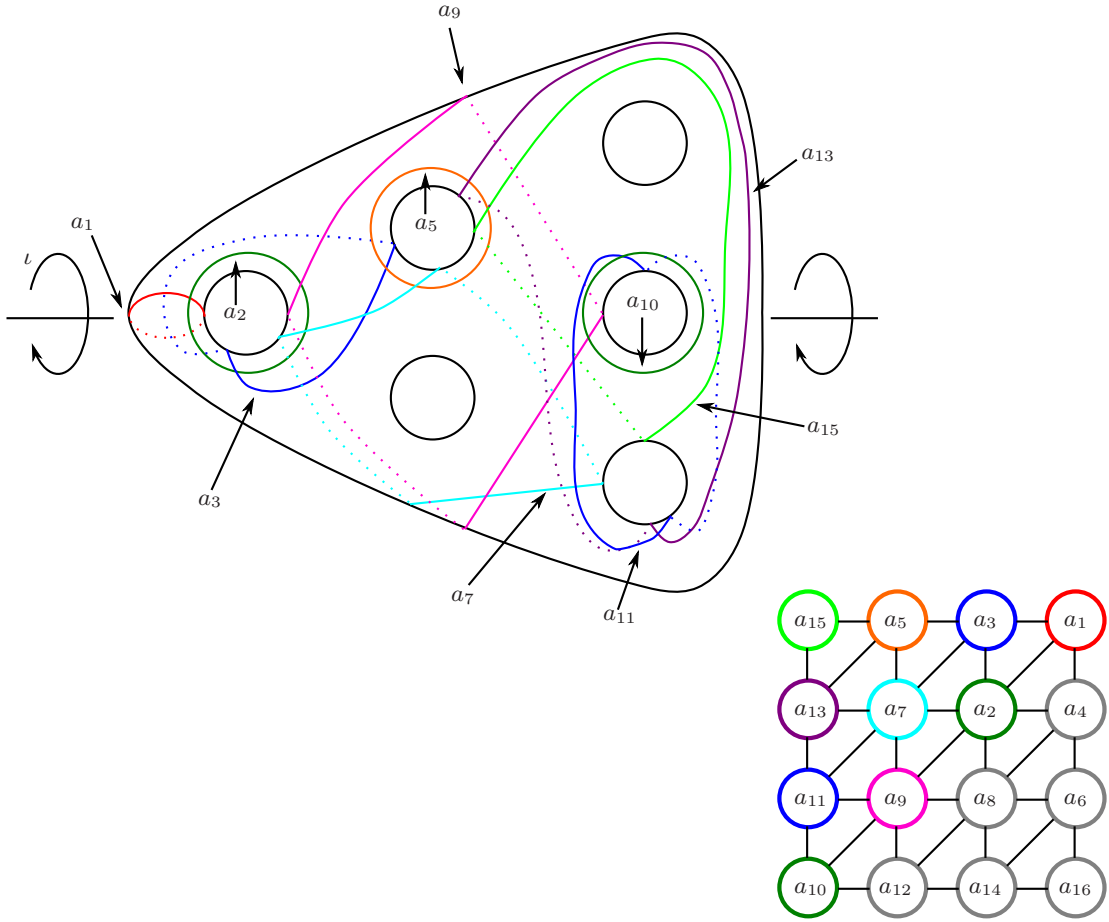}
\caption{A L\"onne configuration on $\Sigma_6$. Only a portion of the figure has been drawn: the omitted curves are obtained by applying the involution $\iota$ to the depicted curves.}
\label{figure:Lconfig}
\end{center}
\end{figure}

Figure \ref{figure:Lconfig} presents a picture of a L\"onne configuration in the case of interest $d = 5$. This was obtained by ``building the surface'' curve by curve, attaching one-handles in the sequence indicated by the numbering of the curves $a_1, \dots, a_{16}$. There are other, more uniform depictions of L\"onne configurations which arise from Akbulut-Kirby's picture of a plane curve of degree $d$ derived from a Seifert surface of the $(d,d)$ torus link (see \cite{AK} or \cite[Section 6.2.7]{GS}), but the analysis to follow is easier to carry out using the model of Figure \ref{figure:Lconfig}.

\para{Step 3: Producing vanishing cycles} The bulk of this step will establish claim (\ref{item:VC}); claim (\ref{item:square}) follows as an immediate porism. The principle is to exploit the fact that if $a$ and $b$ are vanishing cycles, then so is $T_a(b)$. To begin with, curves $c_1,c_2, c_4, c_8,$ and $c_{12}$ are elements of the L\"onne configuration and so are already vanishing cycles. The curve $c_3$ is obtained as
\[
c_3 = T_{a_2}^{-1}(a_3);
\]
similarly,
\[
c_{13} = T_{a_2}^{-1}(a_4).
\]
Curve $c_{10}$ is obtained as
\[
c_{10} = T_{a_{15}}(a_{13});
\]
$c_6$ is obtained from $a_{14}$ and $a_{16}$ analogously. 

The curve $c_9$ is obtained as 
\[
c_9 = T_{c_{10}} T_{a_{10}}^{-1} (a_{11});
\]
$c_7$ is obtained from $a_{10}, a_{12},$ and $c_6$ analogously.

To obtain $c_5$, twist $a_{13}$ along the chain $c_6, \dots, c_{10}$:
\[
c_5 = T_{c_6}^{-1} T_{c_7}^{-1} T_{c_8}^{-1} T_{c_9}^{-1} T_{c_{10}}^{-1}(a_{13}).
\]
$c_{11}$ is obtained by an analogous procedure on $a_{14}$. 

The sequence of twists used to exhibit $x$ as a vanishing cycle is illustrated in Figure \ref{figure:x}. Symbolically,
\[
x = T_{c_6}^{-1} T_{c_7}^{-1} T_{c_8}^{-1}T_{c_9}^{-1}T_{c_5} T_{c_4} T_{c_6}^{-1}T_{c_7}^{-1}T_{c_8}^{-1}T_{a_{9}}^{-1}(a_7).
\]
$y$ is produced in an analogous fashion, starting with $a_{8}$ in place of $a_6$.

To produce $z$, we appeal to the genus-$2$ star relation. Applied to the surface bounded by $b,y,z$, it shows that $T_b^2 T_y T_z \in \im(\rho_5)$, and hence $T_{b}^2 T_z \in \im(\rho_5)$ since $T_y \in \im(\rho_5)$ by above. Observe that $i(c_{10}, z) = 1$, and that $T_{c_{10}} \in \im(\rho_5)$. Making use of the fact that $b$ is disjoint from both $z$ and $c_{10}$, the braid relation gives
\[
T_{c_{10}}T_b^2 T_z(c_{10}) = T_{c_{10}} T_z (c_{10}) = z.
\]
This exhibits $z$ as a vanishing cycle, establishing claim (\ref{item:VC}) of Lemma \ref{lemma:config}. As $T_b^2 T_z$ and $T_z$ are now both known to be elements of $\im(\rho_5)$, it follows that $T_b^2 \in \im(\rho_5)$ as well, completing claim (\ref{item:square}). 

\begin{figure}[ht!]
\begin{center}
\labellist
\small
\pinlabel $T_{a_9}^{-1}$ at 120 180
\pinlabel $T_{c_6}^{-1}T_{c_7}^{-1}T_{c_8}^{-1}$ at 280 180
\pinlabel $T_{c_5}T_{c_4}$ at 236.8 92
\pinlabel $T_{c_6}^{-1}T_{c_7}^{-1}T_{c_8}^{-1}T_{c_9}^{-1}$ at 225 20
\endlabellist
\includegraphics{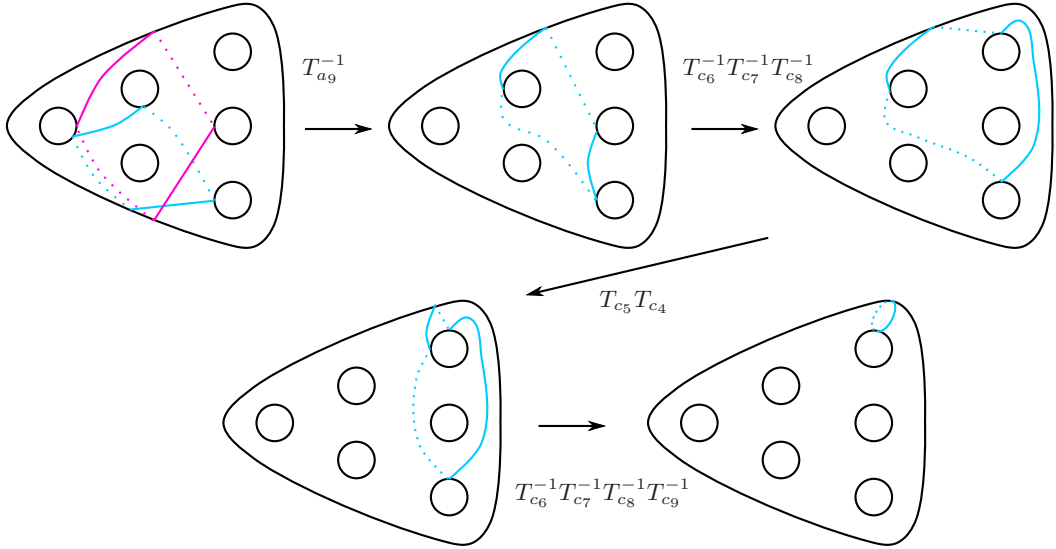}
\caption{The sequence of twists used to obtain $x$.}
\label{figure:x}
\end{center}
\end{figure}
\end{proof}

\section{Proof of Theorem \ref{theorem:d5}}\label{section:proofB}
	In this final section we assemble the work we have done so far in order to prove Theorem \ref{theorem:d5}.
	
	\para{Step 1: Reduction to the Torelli group} The first step is to reduce the problem of determining $\im(\rho_5)$ to the determination of $\im(\rho_5) \cap \mathcal{I}_6$. This will follow from \cite{beauville}. Recall that Beauville establishes that $\im(\Psi \circ \rho_5)$ is the entire stabilizer of an odd-parity spin structure on $H_1(\Sigma_6; \Z)$. This spin structure was identified as $\phi_5$ in Section \ref{section:construction}. Therefore $\im(\Psi \circ \rho_5) = \im(\Psi \circ \operatorname{Mod}(\Sigma_6)[\phi_5])$. It therefore suffices to show that
	\begin{equation}\label{equation:goal}
	\im(\rho_5) \cap \ker \Psi = \operatorname{Mod}(\Sigma_6)[\phi_5] \cap \ker \Psi = \mathcal{I}_6.
	\end{equation}
	
	\begin{figure}[ht!]
	\begin{center}
	\labellist
	\small
	\pinlabel $k=3$ at 0 358.4
\pinlabel $k=5$ at 0 244
\pinlabel $k=7$ at 0 128.4
\pinlabel $k=9$ at 0 0
\pinlabel $k=6$ at 163.2 358.4
\pinlabel $k=8$ at 163.2 244
\pinlabel $k=10$ at 163.2 128.4
\pinlabel $k=12$ at 163.2 0
\pinlabel $\gamma$ at 250 460
\endlabellist
	\includegraphics{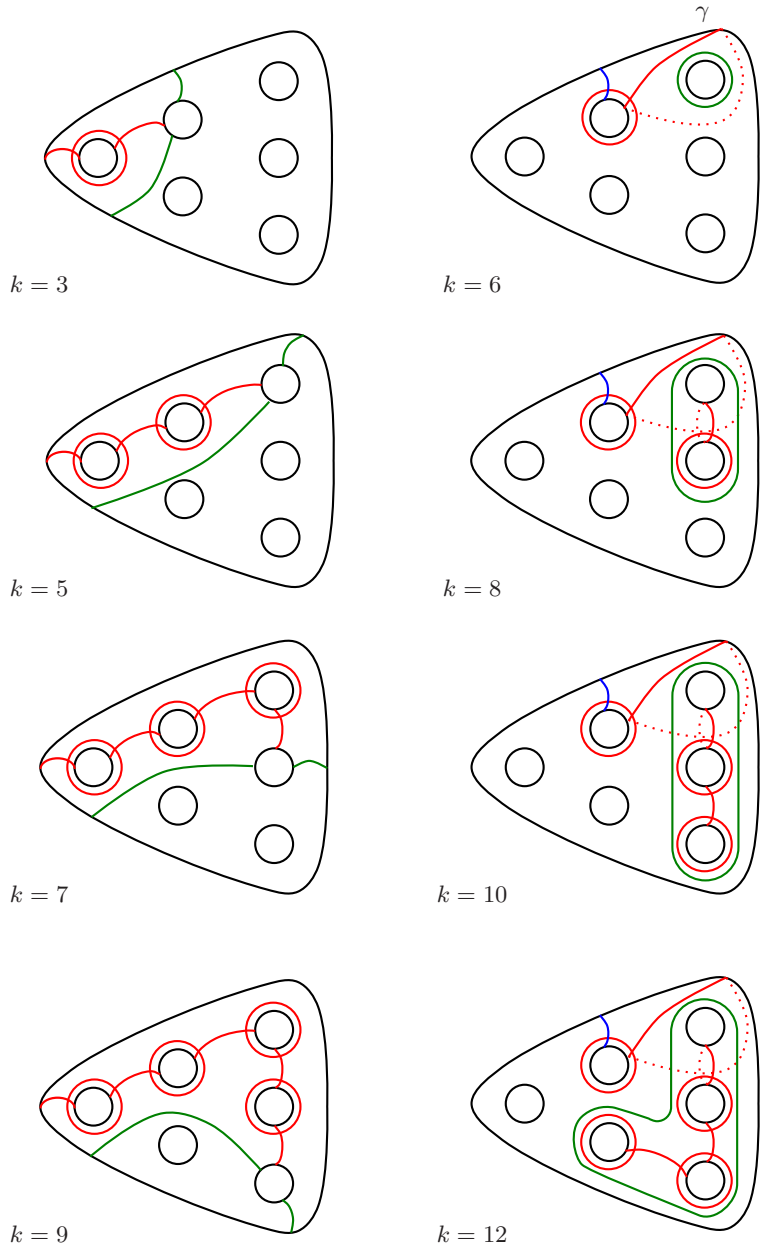}
	\caption{The cases of Step 2}
	\label{figure:cases}
	\end{center}
	\end{figure}
	
	\para{Step 2: Enumeration of cases} Equation (\ref{equation:goal}) will be derived as a consequence of Theorem \ref{theorem:genset}. Lemma \ref{lemma:config}.\ref{item:VC} asserts that the curves $c_1, \dots, c_{12}$ in the Johnson generating set are contained in $\im(\rho_5)$, so that the first hypothesis of Theorem \ref{theorem:genset} is satisfied. There are then eight cases to check: the four {\em straight chain maps} of the form $(c_1, \dots, c_k)$ for $k = 3,5,7,9$, and the four {\em $\beta$-chain maps} of the form $(\beta, c_5, \dots, c_k)$ for $k = 6, 8, 10, 12$. See Figure \ref{figure:cases}.
	
	The verification of the $\beta$-chain cases will be easier to accomplish after conjugating by the class $g = T_x T_{c_5}^{-1} T_{c_4}^{-1} \in \im(\rho_5)$. This has the following effect on the curves in the $\beta$-chains (the curve $\gamma$ is indicated in Figure \ref{figure:cases} in the picture for $k = 6$):
	\[
	g(\beta) = b, \qquad g(c_5) = c_4, \qquad g(c_6) = \gamma, \qquad g(c_k) = c_k \mbox{ for } k \ge 7.
	\]

	\para{Step 3: Producing bounding-pair maps} In this step, we explain the method by which we will obtain the necessary bounding-pair maps. This is an easy consequence of the chain relation. 
	\begin{lemma} \label{lemma:BP}
	Let $C = (c_1, \dots, c_k)$ be a chain of odd length $k$ and boundary $\partial C = d_1 \cup d_2$. Suppose that the mapping classes
	\[
	T_{c_1}^2, T_{c_2}, \dots, T_{c_k}, T_{d_1}^2
	\]
	are all contained in some subgroup $\Gamma \le \Mod(\Sigma_g)$. Then the chain map associated to $C$ (i.e. the bounding pair map $T_{d_1} T_{d_2}^{-1}$) is also contained in $\Gamma$. 
	\end{lemma}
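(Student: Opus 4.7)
The plan is to reduce the claim to an elementary manipulation of the chain relation for odd-length chains (Proposition \ref{proposition:chain}). That relation gives
\[
T_{d_1} T_{d_2} = (T_{c_1}^2 T_{c_2} \cdots T_{c_k})^k,
\]
and the right-hand side lies in $\Gamma$ by the hypothesis that $T_{c_1}^2, T_{c_2}, \dots, T_{c_k}$ are all in $\Gamma$. So from the outset I know that the ``symmetric combination'' $T_{d_1} T_{d_2}$ is available inside $\Gamma$, and the remaining task is to pass from it to the ``antisymmetric'' bounding pair map $T_{d_1} T_{d_2}^{-1}$.

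The key observation is that $d_1$ and $d_2$ are disjoint simple closed curves (they are the two components of the boundary of a regular neighborhood of $c_1 \cup \cdots \cup c_k$), so by the disjointness criterion for commuting Dehn twists the elements $T_{d_1}$ and $T_{d_2}$ commute. Combined with the extra hypothesis $T_{d_1}^2 \in \Gamma$, I can then write
\[
T_{d_1} T_{d_2}^{-1} \;=\; T_{d_1}^2 \cdot (T_{d_1} T_{d_2})^{-1},
\]
where commutativity was used to move a factor of $T_{d_1}^{-1}$ past $T_{d_2}^{-1}$. Both factors on the right are in $\Gamma$ (the first by hypothesis, the second by the previous paragraph), so the product lies in $\Gamma$ as required.

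There is no real obstacle here: once Proposition \ref{proposition:chain} is in hand, the lemma is a one-line commutator-style manipulation, and its role in the paper is purely organizational — it packages the way that squared twists about boundary curves can be traded for the genuine bounding-pair (i.e.\ Johnson) generators needed to apply Theorem \ref{theorem:genset}. The only thing to be careful about in writing up the proof is to note explicitly that $[T_{d_1}, T_{d_2}] = 1$ before rearranging.
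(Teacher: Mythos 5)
Your proof is correct and follows exactly the paper's argument: apply the odd-length chain relation to get $T_{d_1}T_{d_2} \in \Gamma$, then combine with $T_{d_1}^2 \in \Gamma$ (using that the disjoint twists commute) to extract $T_{d_1}T_{d_2}^{-1}$. The paper's version is just more terse, leaving the commutativity manipulation implicit.
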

	\begin{proof}
	The chain relation (Proposition \ref{proposition:chain}) implies that $T_{d_1} T_{d_2} \in \Gamma$. By hypothesis, $T_{d_1}^2 \in \Gamma$, so the bounding pair map $T_{d_1}T_{d_2}^{-1} \in \Gamma$ as well. 
	\end{proof}

	\para{Step 4: Verification of cases} Lemma \ref{lemma:config} asserts that the classes $T_{c_i}, 1 \le i \le 12$, as well as $T_b^2$ are all contained in $\im(\rho_5)$. The class $\gamma$ is obtained from $c_6$ by the element $g \in \im(\rho_5)$, so $\gamma$ is a vanishing cycle as well.  Via Lemma \ref{lemma:BP}, it remains only to show that in each of the cases in Step 2, one of the boundary components $d_1$ satisfies $T_{d_1}^2 \in \im(\rho_5)$. 
	
	The straight chain maps are depicted in the left-hand column of Figure \ref{figure:cases}. For $k = 3$, one boundary component is $b$; we have already remarked how $T_b^2 \in \im(\rho_5)$. For $k = 5$, one of the boundary components is $x$. For $k = 7$, one uses the methods of Lemma \ref{lemma:config} to show that the right-hand boundary component $c$ satisfies $T_c^2 \in \im(\rho_5)$ (the proof is identical to that for $b$). Finally, for $k = 9$, one of the boundary components is $y$.
	
	We turn to the $\beta$-chains. The images of the $\beta$-chains under the map $g$ are depicted in the right-hand column of Figure \ref{figure:cases}. For $k = 6,8,10,12$, let $d_k$ denote the boundary component depicted there for the chain $(b, c_4, \gamma, c_7, \dots, c_k)$. Observe that $d_k$ is also a boundary component of the chain map for $(c_6, \dots, c_k)$ (in the case $k=6$, the boundary component $d_6$ is just $c_6$). Moreover, the chain map for $(c_6, \dots, c_k)$ is conjugate to the chain map for $(c_1, \dots, c_{k-5})$ by an element of $\im(\rho_5)$ (this is easy to see using the isomorphism between the group generated by $c_1, \dots, c_{12}$ and the braid group $B_{13}$ on $13$ strands). Via the verification of the straight-chain cases, it follows that $T_{d_k}^2 \in \im(\rho_5)$, and so by Lemma \ref{lemma:BP} the $\beta$-chain maps are also contained in $\im(\rho_5)$. \qed

\bibliography{planecurvemonodromy}{}
\bibliographystyle{alpha}

\end{document}